\documentclass[article,12pt]{amsart}
\usepackage{amscd}
\usepackage{bbm}
\usepackage{dsfont}
\usepackage{enumerate}
\usepackage{latexsym}

\usepackage{srcltx}
\usepackage{amsfonts}
\usepackage{amssymb}
\usepackage{datetime}
\usepackage{setspace}

\newtheorem{theorem}{Theorem}[section]
\newtheorem{proposition}[theorem]{Proposition}
\newtheorem{lemma}[theorem]{Lemma}
\newtheorem{corollary}[theorem]{Corollary}

\theoremstyle{definition}
\newtheorem{example}[theorem]{Example}
\newtheorem{definition}[theorem]{Definition}

\newtheorem{remark} [theorem] {Remark}
\newtheorem{conjecture} [theorem] {Conjecture}
\begin{document}

\title{ Spectrum of Weighted Composition Operators \\
Part II \\
Weighted composition operators
on subspaces of Banach lattices}

\author{A. K. Kitover}

\address{Community College of Philadelphia, 1700 Spring Garden St., Philadelphia, PA, USA}

\email{akitover@ccp.edu}

\subjclass[2000]{Primary 47B33; Secondary 47B48, 46B60}

\date{\today}

\keywords{Disjointness preserving operators, spectrum}

\maketitle

\markboth{A.K.Kitover}{Spectrum of weighted composition operators. II}

\bigskip

\begin{abstract} We describe the spectrum of weighted $d$-isomorphisms of Banach lattices restricted on closed subspaces that are ``rich'' enough to preserve some ``memory'' of the order structure of the original lattice. The examples include (but are not limited to) weighted isometries of Hardy spaces on the polydisk and unit ball in $\mathds{C}^n$.
\end{abstract}

This paper is a continuation of the study of spectrum of weighted composition operators attempted in~\cite[Part 3]{AAK} and~\cite{Ki}. The results of the current paper heavily depend on those in~\cite{AAK} and~\cite{Ki}, and the reader is referred to~\cite{Ki} for the notations not explained here. The main goal is to establish a connection between the spectrum of a weighted $d$-isomorphism $T$ on a Banach lattice $X$ and the spectrum of its restriction on a closed $T$-invariant subspace $Y$ of $X$. Surely, for such a connection to exist and be meaningful we must assume some ``richness'' of the subspace $Y$. The reader is referred to Definitions~\ref{d1} and~\ref{d3} for details but the Hardy spaces $H^p$ considered as subspaces of the corresponding $L^p$ spaces on the unit circle, or a unital uniform algebra as a subspace of the space of all continuous functions on its Shilov boundary represent typical examples of ``rich'' subspaces.

\bigskip

The following notations will be used throughout the paper.

\noindent $X$ - a Banach lattice over the field of complex numbers $\mathds{C}$.

\noindent $Z(X)$ - the center of the Banach lattice $X$.

\noindent $\sigma(T,Y)$ - the spectrum of a bounded linear operator $T$ on a Banach space $Y$.

\noindent $\sigma_r(T,Y) = \{\lambda \in \sigma(T,Y): \; \exists c>0 \; \|\lambda x - Tx \| \geq c \|x\|, \; x \in X \}$.

\noindent $\sigma_{ap}(T,Y) = \sigma(T,Y) \setminus \sigma_r(T,Y)$.

\noindent $\rho(T,Y)$ - spectral radius of $T$ on $Y$.

\noindent \textit{Disjointness preserving operator} - a linear operator $T$ on a Banach lattice $X$ such that
$|x_1| \wedge |x_2| = 0 \Rightarrow |Tx_1| \wedge |Tx_2| =0, \; x_1, x_2 \in X$.

\noindent $d$-\textit{isomorphism} - a linear, bounded, invertible, and disjointness preserving operator on $X$. It is well known (see e.g~\cite{AK}) that the inverse of such an operator also preserves disjointness. \footnote{Often another definition of a $d$-isomorphism is used when instead of invertibility only the equivalence $|x_1| \wedge |x_2| = 0 \Leftrightarrow |Tx_1| \wedge |Tx_2| =0$ is required.}

\bigskip

The following definition was introduced by Glicksberg in~\cite[Definition 4.8.1]{Gl}

\begin{definition} \label{d0} Let $A$ be a closed subspace of $C(K)$. We say that $A$ is \textit{approximating in modulus} if for any nonnegative $f \in C(K)$ and for any positive $\varepsilon$ there is a $g \in A$ such that $\|f - |g| \|_{C(K)} \leq \varepsilon$.

\end{definition}

For our purposes it will be useful to extend the previous definition on the class of Banach lattices.

\begin{definition} \label{d1} Let $X$ be a Banach lattice and $Y$ be a closed subspace of $X$. We will say that $Y$ is an approximating in modulus, or $AIM$- subspace of $X$ if for any $x \in X$ and any positive $\varepsilon$ there is a $y \in Y$ such that $\||x| - |y|\| \leq \varepsilon$.

\end{definition}

\begin{theorem} \label{t1} Let $X$ be a Dedekind complete Banach lattice, $S$ be a $d$-isomorphism of $X$, and $A \in Z(X)$. Let $T = AS$. Let $Y$ be an $AIM$-subspace of $X$ such that $TY \subseteq Y$. Then $\sigma_{ap}(T,X) = \sigma_{ap}(T,Y)$.

\end{theorem}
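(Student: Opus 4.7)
The plan is to establish the two inclusions in $\sigma_{ap}(T,X)=\sigma_{ap}(T,Y)$ separately. The direction $\sigma_{ap}(T,Y)\subseteq\sigma_{ap}(T,X)$ is essentially automatic: any normalized sequence $(y_n)\subset Y$ with $\|(T-\lambda)y_n\|\to 0$ is in particular a normalized sequence in $X$, so it witnesses that $T-\lambda$ fails to be bounded below on $X$; consequently $\lambda\in\sigma(T,X)\setminus\sigma_r(T,X)=\sigma_{ap}(T,X)$.

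For the non-trivial direction $\sigma_{ap}(T,X)\subseteq\sigma_{ap}(T,Y)$ I would first pass to the modulus. Because $T=AS$ is a $d$-isomorphism on the Dedekind complete Banach lattice $X$, the modulus $|T|$ exists as a positive bounded operator and satisfies $|Tx|=|T|(|x|)$ for every $x\in X$. Given a normalized approximate eigenvector $(x_n)$ of $T$ at $\lambda$, the pointwise inequality $\bigl||Tx_n|-|\lambda x_n|\bigr|\le|(T-\lambda)x_n|$ yields $\bigl\||T|(|x_n|)-|\lambda|\,|x_n|\bigr\|\to 0$. Invoking Definition~\ref{d1}, I would then pick $y_n\in Y$ with $\||x_n|-|y_n|\|<1/n$, so that $\|y_n\|\to 1$ and, by boundedness of $|T|$, $\bigl\||Ty_n|-|\lambda|\,|y_n|\bigr\|\to 0$; equivalently, $(|y_n|)$ is a positive approximate eigenvector of $|T|$ at $|\lambda|$ whose ``carrier'' $y_n$ already lies in $Y$.

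The real difficulty is to upgrade this modulus-level approximate eigenvalue relation to the norm-level statement $\|Ty_n-\lambda y_n\|\to 0$ required by the definition of $\sigma_{ap}(T,Y)$. AIM controls only $|y_n|$, so the phase of $y_n$ is a priori unrelated to that of $x_n$, and the vectors $Ty_n$ and $\lambda y_n$ may have close moduli yet remain far apart in norm. Closing this phase gap is where the spectral machinery developed in~\cite{AAK} and~\cite{Ki} is expected to enter; I would appeal to the description given there of $\sigma_{ap}(T,X)$ for weighted $d$-isomorphisms in terms of the dynamics of the homeomorphism induced by $S$ on the Gelfand space of $Z(X)$, and combine it with the hypothesis $TY\subseteq Y$ to replace $y_n$ by an average of iterates of the form $\sum_{k=0}^{N_n-1}c_k\lambda^{-k}T^ky_n\in Y$ whose phases are forced into alignment with $\lambda$ along the orbit supporting the spectral data, while the modulus estimate from AIM prevents the combination from degenerating.

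The central technical obstacle I anticipate is precisely the quantitative verification that such an averaged element can be chosen normalized and with $(T-\lambda)$-image tending to zero; once that non-collapse is secured, the whole theorem reduces to the modulus statement for $|T|$ on the positive cone, which AIM directly controls.
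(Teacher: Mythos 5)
Your setup is right as far as it goes: the easy inclusion, the $\lambda=0$ case (where $\|Tx\|=\||T||x|\|$ makes the modulus argument complete on its own), and the identification of the phase problem as the real obstacle are all consistent with the paper. But the proposal stops exactly where the proof has to begin: you name "the quantitative verification that such an averaged element can be chosen normalized and with $(T-\lambda)$-image tending to zero" as the central obstacle and leave it unresolved, and that verification is essentially the entire content of the theorem. Moreover, the mechanism you gesture at --- forcing the phases of $\sum_k c_k\lambda^{-k}T^k y_n$ "into alignment" --- is not how the difficulty is actually overcome. The paper never aligns phases. Instead it first \emph{localizes}: using Frolik's theorem it splits into the case where the approximate eigenvectors charge a band of $\varphi$-periodic points of some fixed period $m$ (there $T^mP_m$ is a central multiplication operator $M$ with $M(g)=1$ somewhere, and one takes $y\in Y$ concentrated, via AIM, on a small clopen $V\ni g$ whose iterates $\varphi^{-j}(V)$, $j=0,\dots,m-1$, are pairwise disjoint), and the aperiodic case, where a maximal clopen $E$ with $\varphi^{-i}(E)\cap\varphi^{-j}(E)=\emptyset$ for $0\le i<j\le 2N$ is produced and $y_n$ is chosen with $|y_n|$ close to $|P_E z_n|$. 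In both cases the point is that the iterates $T^jy$ then have pairwise disjoint supports, so the sum $\sum_j c_j T^j y$ telescopes under $T-I$ into boundary terms controlled purely by modulus data; the phase of $y$ is simply irrelevant, which is why AIM (which only controls $|y|$) suffices.

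Two further ingredients you would need and do not supply: (i) applying AIM to the localized piece $|P_Ez_n|$ rather than to $|x_n|$ itself, together with the small computation showing that an AIM approximant can be renormalized to be nearly supported in a prescribed band; and (ii) the non-collapse estimate $\liminf_n\|P_Ez_n\|>0$, which is proved by a Halmos--Rokhlin partition argument using the maximality of $E$ --- without it the averaged element could have vanishing norm and the whole construction degenerates. Since (ii) is precisely the "non-collapse" you flag as anticipated but unproven, the proposal as written has a genuine gap rather than being a complete alternative argument.
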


\begin{proof} We start with recalling a few pretty obvious things. First notice that the $d$-isomorphism $S$ generates an isomorphism of the center $Z(X)$ of $X$ according to the formula $f \rightarrow SfS^{-1}$ (see e.g~\cite[Proposition 8.3]{AK}, \cite{Ab}, and~\cite{AVK}). To this isomorphism corresponds a homeomorphism $\varphi$ of the Stonean compact $K$ of $X$ (we identify $Z(X)$ and $C(K)$) and $supp \, Sx = \varphi^{-1}(supp x), x \in X$ whence
 $$supp \;(Tx) \subseteq \varphi^{-1}(supp \; x) \eqno{(1)}$$
 For $f \in Z(X)$ we will identify the central operator $SfS^{-1}$ and the function $f \circ \varphi$ in $C(K)$.
We will need the formula
$$ T^k = A_k S^k, k \in \mathds{N}$$
where $A_k = A(A \circ \varphi) \ldots (A \circ \varphi^{k-1})$. Recall also that the operator $T$ is regular and that $\|Tx\| = \||T||x|\|, x \in X$. Let us proceed with the proof of the theorem.

The inclusion $\sigma_{ap}(T,Y) \subseteq \sigma_{ap}(T,X)$ is obvious. Let $\lambda \in \sigma_{ap}(T,X)$.
 Let us first consider the case when $\lambda = 0$. Let $x_n \in X, \|x_n\| = 1, Tx_n \rightarrow 0$. Then $\||T||x_n|\| = \|Tx_n\| \rightarrow 0$. Let $y_n \in Y$ be such that $|y_n| - |x_n| \rightarrow 0$. Then $\|Ty_n\| = \||T||y_n|\| \rightarrow 0$ whence $0 \in \sigma_{ap}(T,Y)$.
 Assume now that $\lambda \neq 0$. Without loss of generality we can assume that $\lambda = 1$. Let $x_n \in X$, $\|x_n\|=1$, and $Tx_n - x_n \to 0$.
 For any natural $m$ let $G_m$ be the subset of $K$ defined as
$$G_m = \{k \in K : \; \varphi^m(k) = k, \; \varphi^j(k) \neq k, j \in \mathds{N}, j < m.\}$$
By Frolik's theorem~\cite[Theorem 6.25, p.150]{Wa} $G_m$ is a clopen (maybe empty) subset of $K$. We will denote the corresponding band projection in $X$ by $P_m$. Let also $F_m = \bigcup \limits_{j=1}^m G_m$. It follows from the definition of $G_m$ and from $(1)$ that $P_m T = T P_m$ and that the operator $T^m P_m = P_m T^m P_m$ is a band preserving and therefore a central operator on the band $P_mX$. We have to consider two possibilities.

\noindent $(1)$ There is an $m \in \mathds{N}$ such that $\| P_m x_n \| \mathop \nrightarrow \limits_{n \to \infty} 0$. Then $1 \in \sigma_{ap}(T^m P_m)$. Indeed, $T^m P_m x_n - P_m x_n = P_m T^m x_n - P_m x_n \mathop \rightarrow \limits_{n \to \infty} 0$. The operator $T^m P_m  $ on $P_mX$ can be represented as an operator of multiplication on $M \in C(G_m)$ and therefore there is a point $g \in G_m$ such that $M(g) = 1$. Let us fix a positive $\varepsilon$ and let $\delta = \Big{(}(2 + \|T\|) \big{(} 1 + \sum \limits_{i=1}^{m-1} \|T^i\| \big{)} \Big{)}^{-1}$. Let  $V$ be a clopen neighborhood of $g$ in $G_m$ such that $|M - 1| < \delta$ on $V$ and the sets
$\varphi^{-j}(V)$, $j = 0,1, \ldots , m-1$ are pairwise disjoint. Because $Y$ is an $AIM$-subspace of $X$ we can find a $y \in Y$ such that $\|y\| = 1$ and $\|y - P_V y \| < \delta$ where $P_V$ is the band projection corresponding to the clopen set $V$. Indeed, let $x$ be a positive element from $P_V X$ such that $\|x \| = 1$. Let $\gamma$ be such that $\frac{2\gamma}{1 - \gamma} = \delta$. Let $z \in Y$ and $\||z| - x \| < \gamma$. Then $\|P_V |z| - x \| = \|P_V |z| - P_V x \| < \gamma$ whence $\||z| - P_V |z| \| < 2 \gamma$. But $P_V$ is a band projection and therefore $|z - P_Vz| = |z| - |P_Vz|$ whence $\|z - P_V z\| < 2 \gamma$. Take $y = \frac{z}{\|z\|}$. Then $\|y - P_V y \| < \frac{2 \gamma}{\|z\|} \leq \frac{2\gamma}{1 - \gamma} = \delta$.

Let $v = \sum \limits_{j=0}^{m-1} T^jy$ and $w = P_V \sum \limits_{j=0}^{m-1} T^jy =  \sum \limits_{j=0}^{m-1} T^jP_V y$. Then it is easy to see that
\begin{enumerate}
  \item $\|v - w\| < \delta \big{(} 1 + \sum \limits_{i=1}^{m-1} \|T^i\| \big{)}$.
  \item $\|w\| \geq \|P_Vy\| \geq 1 - \delta$.
  \item $\|Tw - w\| = \|T^m P_V y - y\| = \|(M-1)P_V y\| < \delta$.
\end{enumerate}
It follows immediately from inequalities $(1) - (3)$ that
$$\|v\| > 1 - \delta \big{(} 2 + \sum \limits_{i=1}^{m-1} \|T^i\| \big{)} > 1 - \varepsilon$$
 and that
$$\|Tv - v\| \leq \|v - w\| + \|Tv - Tw\| + \|Tw - w\| < \delta (2 + \|T\|) \big{(} 1 + \sum \limits_{i=1}^{m-1} \|T^i\| \big{)} = \varepsilon.$$
Because $\varepsilon$ can be made arbitrary small we see that $1 \in \sigma_{ap}(T,Y)$.

\noindent $(2)$ For any natural $m$ we have $\|P_m x_n\| \mathop \to \limits_{n \to \infty} 0$. Let us fix a large natural $N$ and let $K_N = K \setminus F_{2N + 1}$. Let $Q$ be the band projection in $X$ corresponding to the clopen set $K_N$ and let $z_n = Qx_n$. Then $\|z_n\| \mathop \to \limits_{n \to \infty} 1$ and $Tz_n - z_n \mathop \to \limits_{n \to \infty} 0$. It follows from the fact that $K$ is extremally disconnected, from the definition of the set $K_N$, and from the Zorn's lemma that there is a maximal by inclusion clopen subset $E$ of $K_N$ such that
 $$\varphi^{-i}(E) \cap \varphi^{-j}(E) = \emptyset , \; 0 \leq i < j \leq 2N. \eqno{(2)}$$
Let $P_E$ be the band projection corresponding to the clopen set $E$. We claim that
$$\liminf \limits_{n \to \infty} \|P_E z_n \| > 0. \eqno{(3)}$$
Indeed, assume to the contrary that $\liminf \limits_{n \to \infty} \|P_E z_n \| = 0$. By choosing if necessary a subsequence we can assume that $\lim \limits_{n \to \infty} \|P_E z_n \| = 0$. Let $E_i = E \cap \varphi^i(E), i \in [2N+2 : 4N+1]$. It follows from the fact that $E$ is a maximal by inclusion clopen subset of $K_N$ satisfying $(2)$ that the sets $\varphi^{-j} (E_i), \; 2N + 2 \leq i \leq 4N+1, \; 0 \leq j \leq i - 1$ make a partition of $K_N$ (the proof is basically the same as the one of the Halmos - Rokhlin lemma). Let $P_{ij}$ be the band projection corresponding to the set $\varphi^{-j}(E_i)$. Then
$P_{ij} T^i z_n = (P_{i0} \circ \varphi^i)T^i z_n = T^i P_{i0}z_n$. But $P_{i0}$ is the band projection corresponding to the set $E_i$ whence
$\|P_{i0}z_n\| \leq \|P_E z_n\| \mathop \rightarrow \limits_{n \to \infty} 0$ and therefore $P_{ij} T^i z_n  \mathop \rightarrow \limits_{n \to \infty} 0$. But $T^i z_n - z_n \mathop \rightarrow \limits_{n \to \infty} 0$ whence $P_{ij}z_n \mathop \rightarrow \limits_{n \to \infty} 0$. Recalling that
$\sum \limits_{i,j} P_{ij} = Q$ and that $Qz_n = z_n$ we see that $z_n \mathop \rightarrow \limits_{n \to \infty} 0$, a contradiction.

Let $\mathcal{E} = \{E : E $ is maximal by inclusion closed subset of $ K_N$ satisfying $(2) \}$. Notice that because $\varphi$ is a homeomorphism we have $E \in \mathcal{E} \Leftrightarrow \varphi(E) \in \mathcal{E}$.
Let $s_n = \sup \limits_{E \in \mathcal{E}} \|P_E z_n\|$. It follows from $(3)$ that $\liminf \limits_{n \to \infty} s_n = s$ where
$0 < s \leq 1$. Let $E_n \in \mathcal{E}$ be such that $\|(P_n \circ \varphi^N)  z_n\| \geq s_n - 1/n$ where $P_n = P_{E_n}$. Then $\liminf \|(P_n \circ \varphi^N) z_n \| = s$ and we can assume without loss of generality that $\lim \|(P_n \circ \varphi^N) z_n \| = s$.

For any $n \in \mathds{N}$ choose $y_n \in Y$ such that $\||y_n| - |P_n z_n| \| < 1/n$. Notice that then
$$\||P_n y_n| - |P_nz_n| \| =\|P_n (|y_n| - |P_n z_n|)\| \leq 1/n. \eqno{(3^\prime)}$$
 Let
 $$ g_n = \sum \limits_{j= 0}^{2N} \big{(} 1 - \frac{1}{\sqrt{N}} \big{)}^{|j - N|} T^j y_n . $$
 We claim that
 $$\liminf \limits_{n \to \infty} \|g_n\| \geq s \eqno{(4)}$$.
 Indeed
  $$\|g_n\| \geq \|(P_n \circ \varphi^N) g_n \| = \| \sum \limits_{j= 0}^{2N} \big{(} 1 - \frac{1}{\sqrt{N}} \big{)}^{|j - N|} (P_n \circ \varphi^N) T^j y_n \|.$$
 Consider $i = N$ and the corresponding term in the sum above which is $(P_n \circ \varphi^N) T^N y_n $. Notice that
 $$(P_n \circ \varphi^N) T^N y_n = (P_n \circ \varphi^N) A_N S^N y_n = A_N (P_n \circ \varphi^N)S^N y_n =$$
 $$ =A_N S^N S^{-N} (P_n \circ \varphi^N)S^N y_n = T^NP_n y_n.$$
 Next notice that
 $$ \lim \limits_{n \to \infty} \|T^N P_n y_n \| = \lim \limits_{n \to \infty} \|T^N P_n z_n \| = $$
$$= \lim \limits_{n \to \infty} \|(P_n \circ \varphi^N)T^Nz_n \| = \lim \limits_{n \to \infty} \|(P_n \circ \varphi^N)z_n\| = s.$$
Now consider $j \in [0 : 2N], j \neq N$. It follows from $(2)$ and $(3^\prime)$ that
 $$ \lim \limits_{n \to \infty} \|(P_n \circ \varphi^N)T^j y_n\| =\lim \limits_{n \to \infty} \|(P_n \circ \varphi^N)T^j P_n y_n\| = $$
 $$ = \lim \limits_{n \to \infty} \|(P_n \circ \varphi^N)(P_n \circ \varphi^j) T^j y_n\| =0,$$
 and $(4)$ is proved.

On the other hand
$$ Tg_n -g_n =- \big{(}1 - \frac{1}{\sqrt{N}}\big{)}^N y_n - \frac{1}{\sqrt{N}} \sum \limits_{j=1}^N \big{(}1 - \frac{1}{\sqrt{N}}\big{)}^{|j-N|} T^j y_n  $$
$$ + \frac{1}{\sqrt{N}} \sum \limits_{j=N+1}^{2N} \big{(}1 - \frac{1}{\sqrt{N}}\big{)}^{j-N} T^j y_n + \big{(}1 - \frac{1}{\sqrt{N}}\big{)}^N T^{2N+1}y_n \eqno{(5)}$$
Let $p_n = \sum \limits_{j=1}^N \big{(}1 - \frac{1}{\sqrt{N}}\big{)}^{|j-N|} T^j y_n $ and $q_n = \sum \limits_{j=N+1}^{2N} \big{(}1 - \frac{1}{\sqrt{N}}\big{)}^{N-j} T^j y_n$. The sequences $p_n$ and $q_n$ are norm bounded and we claim that
$$\|p_n + q_n\| - \|p_n - q_n\| \mathop \rightarrow \limits_{n \to \infty} 0 \eqno{(6)}$$.
To prove $(6)$ it is enough to prove that $|p_n| \wedge |q_n| \mathop \rightarrow \limits_{n \to \infty} 0$. But $|p_n| \leq \sum \limits_{j=1}^N \big{(}1 - \frac{1}{\sqrt{N}}\big{)}^{|j-N|} |T|^j |y_n| $, $|q_n| \leq \sum \limits_{j=N+1}^{2N} \big{(}1 - \frac{1}{\sqrt{N}}\big{)}^{|j-N|} |T|^j |y_n| $, $\sum \limits_{j=1}^N \big{(}1 - \frac{1}{\sqrt{N}}\big{)}^{|j-N|} |T|^j |y_n|  - \sum \limits_{j=1}^N \big{(}1 - \frac{1}{\sqrt{N}}\big{)}^{|j-N|} |T|^j |P_nz_n|  \mathop \rightarrow \limits_{n \to \infty} 0$, $\sum \limits_{j=N+1}^{2N} \big{(}1 - \frac{1}{\sqrt{N}}\big{)}^{|j-N|} |T|^j |y_n|  - \sum \limits_{j=N+1}^{2N} \big{(}1 - \frac{1}{\sqrt{N}}\big{)}^{|j-N|} |T|^j |P_nz_n | \mathop \rightarrow \limits_{n \to \infty} 0$, and in view of $(2)$
$$ \sum \limits_{j=1}^N \big{(}1 - \frac{1}{\sqrt{N}}\big{)}^{|j-N|} |T|^j |P_nz_n| \wedge \sum \limits_{j=N+1}^{2N} \big{(}1 - \frac{1}{\sqrt{N}}\big{)}^{|j-N|} |T|^j |P_nz_n | = 0.$$
and $(6)$ is proved.

 Let $h_n = - \big{(}1 - \frac{1}{\sqrt{N}}\big{)}^N y_n + \frac{1}{\sqrt{N}} \sum \limits_{j=1}^{2N} \big{(}1 - \frac{1}{\sqrt{N}}\big{)}^{|j|} T^j y_n + \big{(}1 - \frac{1}{\sqrt{N}}\big{)}^N T^{2N+1}y_n $. It follows from $(6)$ that $\|Tg_n - g_n \| - \|h_n\| \to 0$. Notice that
$h_n = \frac{1}{\sqrt{N}}g_n - \big{(}1 - \frac{1}{\sqrt{N}}\big{)}^{N+1}y_n + \big{(}1 - \frac{1}{\sqrt{N}}\big{)}^N T^{2N+1}y_n $, that $\limsup \|y_n\| = \limsup \|P_nz_n\| =s$, and that
$ \limsup \|T^{2N+1}y_n\| = \limsup \|T^{2N+1}P_n z_n\| = \limsup \|(P_n \circ \varphi^{2N+1})T^{2N+1} z_n\| = \limsup \|(P_n \circ \varphi^{2N+1}) z_n\| \leq s$ (we used here that $T^{2N+1}z_n - z_n \rightarrow 0$ and that $\varphi^{-(2N+1)}(E) \in \mathcal{E}$).  Finally we get that
$$ \limsup \|Tg_n - g_n \| \leq \big{[}\frac{1}{\sqrt{N}} +2\big{(}1 - \frac{1}{\sqrt{N}}\big{)}^N \big{]} \liminf \|g_n\|.$$
Because $N$ can be chosen arbitrary large we see that $1 \in \sigma_{ap} (T,Y)$.

\end{proof}

\begin{corollary} \label{c1} Assume conditions of Theorem~\ref{t1} and assume additionally that the powers $T^n, n=0, 1, \ldots$ are pairwise disjoint in the lattice $L_r(X)$ of all regular operators on $X$. Then $\sigma(T,Y)$ is rotation invariant.
\end{corollary}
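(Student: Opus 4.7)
The plan is to reduce the rotation invariance of $\sigma(T,Y)$ to that of $\sigma_{ap}(T,X)$ via Theorem~\ref{t1}, and then finish with a short planar-topology observation. First, I would invoke, from \cite{AAK} and \cite{Ki}, the rotation invariance of $\sigma_{ap}(T,X)$ under the pairwise disjointness assumption on the powers of $T=AS$. The standard mechanism in that setting is the construction, for each $\theta\in\mathds{R}$, of an invertible central operator $V_\theta\in Z(X)$ implementing the similarity $V_\theta^{-1}TV_\theta=e^{i\theta}T$; the pairwise disjointness of the $T^n$ guarantees that the orbits of the homeomorphism $\varphi$ of the Stonean compact $K$ admit the kind of ``staircase'' clopen decomposition needed to produce $V_\theta$ as a bounded central multiplier. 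Since a similarity preserves the approximate point spectrum, $e^{i\theta}\sigma_{ap}(T,X)=\sigma_{ap}(T,X)$ for every $\theta$.

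Second, Theorem~\ref{t1} yields $\sigma_{ap}(T,Y)=\sigma_{ap}(T,X)$, so $\sigma_{ap}(T,Y)$ is rotation invariant as well. Combining this with the classical inclusion $\partial\sigma(T,Y)\subseteq\sigma_{ap}(T,Y)$ (which follows from the unboundedness of the resolvent as $\lambda$ approaches $\partial\sigma(T,Y)$ from outside the spectrum, yielding approximate eigenvectors at boundary points), we conclude that $\partial\sigma(T,Y)$ is rotation invariant.

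To close the argument I would invoke the elementary topological lemma that a compact set $K\subseteq\mathds{C}$ whose boundary is rotation invariant is itself rotation invariant. Indeed, for each $r\geq 0$ the set $\partial K\cap\{|z|=r\}$ is a rotation invariant closed subset of a circle, hence either the whole circle or empty; so $\partial K$ is a disjoint union of concentric circles about the origin, and every connected component of $\mathds{C}\setminus\partial K$ is a visibly rotation invariant annular (or disk-like, or exterior) region. By connectedness each such component lies either wholly inside $K$ or wholly outside it, so $K$ is the rotation invariant union of $\partial K$ with a rotation invariant collection of open annuli. Applying this with $K=\sigma(T,Y)$ completes the proof. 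The only step carrying genuine content is the first one, and it is exactly what the pairwise disjointness hypothesis is in place to deliver through the earlier papers; the rest is bookkeeping using Theorem~\ref{t1} and plane topology.
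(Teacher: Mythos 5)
Your proof follows the paper's argument in all essentials: rotation invariance of $\sigma_{ap}(T,X)$ is imported from~\cite{AAK} (the paper cites Theorem 3.14 there; your sketch of the central unimodular similarity $V_\theta$ is indeed the mechanism behind that result), Theorem~\ref{t1} transfers it to $\sigma_{ap}(T,Y)$, and a planar-topology observation finishes the job (the paper compresses this last step into the remark that $\sigma(T,Y)\setminus\sigma_{ap}(T,Y)$ is open in $\mathds{C}$). One inference in your final step is, however, out of order: from $\partial\sigma(T,Y)\subseteq\sigma_{ap}(T,Y)$ together with the rotation invariance of $\sigma_{ap}(T,Y)$ you ``conclude that $\partial\sigma(T,Y)$ is rotation invariant,'' but a subset of a rotation invariant set need not itself be rotation invariant, so this does not follow as written, and your topological lemma needs that hypothesis. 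The repair is to argue on circles directly: if $\lambda\in\sigma(T,Y)$ while $e^{i\theta}\lambda\notin\sigma(T,Y)$ for some $\theta$, then the arc of the circle $|z|=|\lambda|$ joining these two points passes from the (closed) spectrum to its complement and hence meets $\partial\sigma(T,Y)\subseteq\sigma_{ap}(T,Y)$; rotation invariance of $\sigma_{ap}(T,Y)$ then forces the entire circle into $\sigma(T,Y)$, a contradiction. (That $\partial\sigma(T,Y)$ is rotation invariant is then a consequence of the conclusion rather than a step toward it.) With that one-line fix your argument is complete and coincides with the paper's.
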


\begin{proof} It follows from the conditions of the corollary and Theorem 3.14 in~\cite{AAK} that the set $\sigma_{ap}(T,X)$ is rotation invariant whence by Theorem~\ref{t1} $\sigma_{ap} (T,Y)$ is rotation invariant. Because the set $\sigma(T,Y) \setminus \sigma_{ap}(T,Y)$ is open in $\mathds{C}$ we see that $\sigma(T,Y)$ is rotation invariant.

\end{proof}

The next lemma will be used in the sequel but it also might be of independent interest because it provides some information about the residual spectrum
of weighted $d$-isomorphisms.

\begin{lemma} \label{l2} Let $X$ be a Dedekind complete Banach lattice, $S$ be a $d$-isomorphism of $X$, $A \in Z(X)$, $supp \, A = K$, and $T = AS$. Assume that $\lambda \in \sigma(T)\setminus \{0\}$ and $(\lambda I - T)X = X$ (i.e $\lambda \in \sigma_r(T^\star, X^\star))$. Then there is a sequence $\{u_i\}_{i \in \mathds{Z}}$ of nonzero elements in $X$ such that
\begin{itemize}
  \item The elements $u_i, \; i \in \mathds{Z}$ are pairwise disjoint in $X$.
  \item $\sum \limits_{i= -\infty}^\infty \|u_i\| < \infty$.
  \item $Tu_i = \lambda u_{i+1}, \; i \in \mathds{Z}$.
\end{itemize}
\end{lemma}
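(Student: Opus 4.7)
The plan is to realise the chain as a $\varphi$-equivariant decomposition of an eigenvector of $T$ at $\lambda$ supported on the aperiodic locus of the Stonean space $K$.

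First I extract an eigenvector. The hypothesis $(\lambda I - T)X = X$ combined with $\lambda \in \sigma(T)$ forces $\ker(\lambda I - T) \neq \{0\}$: otherwise the open mapping theorem would make $\lambda I - T$ invertible, contradicting $\lambda \in \sigma(T)$. Fix $u \neq 0$ with $Tu = \lambda u$. Since $\mathrm{supp}\,A = K$, the inclusion $(1)$ in the proof of Theorem~\ref{t1} sharpens to the equality $\mathrm{supp}(Tx) = \varphi^{-1}(\mathrm{supp}\,x)$, so $E := \mathrm{supp}\,u$ satisfies $\varphi(E) = E$. The polynomial identity $\lambda^m I - T^m = (\lambda I - T)\sum_{j=0}^{m-1}\lambda^{m-1-j}T^j$ transfers surjectivity to each $\lambda^m I - T^m$; on the clopen stratum $G_m$ of period-$m$ points (as in the proof of Theorem~\ref{t1}) the operator $T^m$ acts as a central multiplier $M_m \in C(G_m)$, and surjectivity of $\lambda^m - M_m$ in $C(G_m)$ forces $\lambda^m \neq M_m$ pointwise. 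Hence no eigenvector of $T$ at $\lambda$ can charge any $G_m$, and $E$ lies entirely in the aperiodic locus of $K$.

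Next I would build the chain. Using a Rokhlin-type Zorn construction on the clopen algebra of $E$ (parallel to the one that produces the set $E$ in formula $(2)$ of the proof of Theorem~\ref{t1}), for each integer $N$ I obtain a clopen $F_N \subseteq E$ whose iterates $\varphi^{-i}(F_N)$, $|i| \leq N$, are pairwise disjoint. Setting $u_i := P_{\varphi^{-i}(F)} u$ for a suitably chosen clopen $F \subseteq E$, and using the intertwining $T P_G = P_{\varphi^{-1}(G)} T$ for band projections (a consequence of $T = AS$ and the $S$-support identity) together with $Tu = \lambda u$, one gets $T u_i = \lambda u_{i+1}$, and the wandering property of $F$ gives pairwise disjointness of the $u_i$.

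The principal obstacle is to pass to the limit $N \to \infty$ and secure both the nonvanishing of every $u_i$ and the norm-summability $\sum_i \|u_i\| < \infty$. In a general Banach lattice a disjoint decomposition of a single element is not automatically norm-summable, so $F$ must be chosen with additional care. My plan is to combine a diagonal argument on a nested sequence $F_{N+1} \subseteq F_N$ in the Boolean algebra of clopen subsets of $E$ with the open mapping constant $C$ attached to the surjection $\lambda I - T$; the geometric control supplied by $C$, applied to iteratively constructed preimages $v_{n+1}$ satisfying $(\lambda I - T) v_{n+1} = v_n$, is what I expect to deliver summability. This summability step is the delicate technical heart of the argument, and it is the place where the residual-spectrum hypothesis has to be used beyond its role in producing the eigenvector.
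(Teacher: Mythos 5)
Your opening moves match the paper: the eigenvector $u$ with $Tu=\lambda u$ exists because a non-invertible surjection must have nontrivial kernel, the hypothesis $supp\,A=K$ upgrades the support inclusion to $supp(Tx)=\varphi^{-1}(supp\,x)$ so that $E=supp\,u$ is $\varphi$-invariant, and $E$ avoids every periodic stratum $G_m$ (the paper reaches the same conclusion via Frolik's theorem and the openness of $\sigma_r(T^\star)$). But from that point on there is a genuine gap, and you have in effect flagged it yourself: you never prove that the chosen wandering set $F$ meets $supp\,u$ in a way that keeps \emph{every} $u_i=P_{\varphi^{-i}(F)}u$ nonzero, and you never prove $\sum_i\|u_i\|<\infty$; you only announce a plan involving the open mapping constant of $\lambda I-T$ and a diagonal argument over nested $F_{N+1}\subseteq F_N$. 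Neither part of that plan is likely to close. A Rokhlin/Zorn construction only yields finite towers, and a diagonal intersection of nested clopen sets can easily have empty interior, killing the $u_i$; and iterated preimages $(\lambda I-T)v_{n+1}=v_n$ controlled by the open mapping constant give norms growing like $C^n$, which points in the wrong direction for summability of a disjoint decomposition of a single fixed vector $u$.

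The idea you are missing is the paper's exploitation of the \emph{openness of the residual spectrum of $T^\star$}: since $1\in\sigma_r(T^\star,X_E^\star)$ and this set is open, one can manufacture, on a common $\varphi$-invariant clopen set $H$, three eigenvectors $u,v,w$ with $Tu=u$, $Tv=(1+\varepsilon)v$, $Tw=(1-\varepsilon)w$ and $supp\,u=supp\,v=supp\,w=H$, then normalize on a clopen $L\subseteq H$ so that $P_L|u|\le P_L|v|\le 2P_L|u|$ and $P_L|u|\le P_L|w|$. The comparison between the growth rates of $u$ and $v$ forces $L\cap\varphi^{-n}(L)=\emptyset$ for all large $n$, which combined with aperiodicity yields a clopen $R\subseteq L$ whose full two-sided orbit $\varphi^{n}(R)$, $n\in\mathds{Z}$, is pairwise disjoint and stays inside $H=supp\,u$ (hence $u_i\neq 0$ for all $i$); and the dominations $|u_n|\le(1-\varepsilon)^n|w_n|$ and $|u_{-n}|\le(1+\varepsilon)^{-n}|v_{-n}|$ give geometric decay of $\|u_i\|$ in both directions, which is exactly the summability you could not reach. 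So the residual-spectrum hypothesis is used not merely to produce the eigenvector, as in your sketch, but to produce the two auxiliary eigenvectors whose order-domination does all the quantitative work.
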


\begin{proof} We can assume without loss of generality that $\lambda = 1$. Let $x \in X \setminus \{0\}$ be such that $Tx = x$. Let $E = supp \, x$.
The condition $supp \, A = K$ guarantees that $\varphi(E) = E$. Let $X_E$ be the band in $X$ corresponding to $E$. Then the band projection $P_E$ commutes with $T$ and therefore $( I - T)X_E = X_E$ whence $1 \in \sigma_r(T^\star, X_E^\star)$. Because the set $\sigma_r(T^\star, X_E^\star)$ is open in $\mathds{C}$ we can find $\varepsilon > 0$ and $y \in X_E \setminus \{0\}$ such that $Ty = (1 + \varepsilon) y$. Let $F = supp \, y$ then $F \subseteq E$ and $\varphi(F) = F$. Applying this procedure once again (and decreasing $\varepsilon$, if necessary) we obtain a nonempty clopen subset $H$ of $K$ and elements $u,v,w \in X_H$ such that
\begin{itemize}
  \item $\varphi(H) = H$.
  \item $supp \, u = supp \, v = supp \, w = H$.
  \item $Tu = u$, $Tv = (1 + \varepsilon)v$, and $Tw = (1 - \varepsilon)w$.
\end{itemize}
Notice that the set $H$ cannot contain $\varphi$-periodic points. Indeed, otherwise using the Frolik's theorem and the fact that $Tu = u$ we would be able to find a nontrivial band $B$ in $X$ and $m \in \mathds{N}$ such that $T^m | B = I |B$ whence $(T^\star)^m | B^\star = I | B^\star$ in contradiction with our assumption that $1 \in \sigma_r(T^\star)$.

Multiplying, if necessary, the elements $u$, $v$, and $w$ by appropriate positive constants we can find a clopen subset $L$ of $H$ such that $P_L|u| \leq P_L |v| \leq 2 P_L |u|$ and $P_L|u| \leq P_L |w|$. We claim that there is an $N \in \mathds{N}$ such that $\forall n \geq N \; L \cap \varphi^{-n}(L) = \emptyset$. To prove it notice that $ |T^n P_L u| = |A_n S^n P_L u| = |A_n P_L S^n P_L S^{-n} S^n u| = |A_n (P_L \circ \varphi^n)S^n u| = |P_{\varphi^{-n}(L)}T^n u| = |P_{\varphi^{-n}(L)} u|$. Similarly, $|T^n P_L v |= (1 + \varepsilon)^n |P_{\varphi^{-n}(L)} v|$. Let $N \in \mathds{N}$ be such that $(1 +\varepsilon)^N \geq 3$. Assume contrary to our claim that for some $n \geq N$ the intersection
$M = L \cap \varphi^{-n}(L) \neq \emptyset$. Recalling that $|T^n x| = |T||x|, x \in X, n \in \mathds{N}$ we see that
$$ |P_M 2u| = |P_M T^n P_L 2u| \geq |P_M T^n P_L v| = $$
$$=(1 + \varepsilon)^n)|P_M v| \geq 3|P_M v| \geq |P_M 3u| $$
whence $P_M u = 0$ in contradiction with $ M \subset supp \, u$. Next, because $L$ does not contain $\varphi$ periodic points we can find a clopen nonempty subset $R$ of $L$ such that the sets $\varphi^{-n}(R), n=0, 1, \ldots, N$ are pairwise disjoint. Therefore the sets $\varphi^{-n}(R), n \in \{0\} \cup \mathds{N}$ are pairwise disjoint, and because $\varphi$ is a homeomorphism the same is true for the sets $\varphi^n(R), n \in \mathds{Z}$.
Let $u_i = P_{\varphi^{-i}(R)}u, i \in \mathds{Z}$. Similarly we define the elements $v_i$ and $w_i$.  Then $Tu_i = u_{i+1}$, $Tv_i = (1 + \varepsilon v_{i+1}$, and $Tw_i = (1 - \varepsilon)w_{i+1}$. Therefore $|u_n| = |T|^n |u_0| \leq |T|^n |w_0| = (1 - \varepsilon)^n |w_n|$ whence
$\|u_n\| \leq (1 - \varepsilon)^n \|w\|, n \in \mathds{N}$. Next for any $n \in \mathds{N}$ we have $T^n u_{-n} = u_0$ and $T^n v_{-n} = (1 + \varepsilon)^n v_0$ whence $|T^n u_{-n}| \leq |T^n (1+\varepsilon)^{-n}v_{-n}|$. Because $supp \, A = K$ the last inequality is equivalent to
$|u_{-n}| \leq (1 + \varepsilon)^{-n}|v_{-n}|$ whence $\|u_{-n}\| \leq (1 + \varepsilon)^{-n}\|v\|$.
\end{proof}

\begin{remark} \label{r1} $(1)$ Because $SA = S(AS)S^{-1}$ the conclusion of Lemma~\ref{l2} is valid for operators of the form $SA$.

$(2)$ The proof of Lemma~\ref{l2} shows that the condition $supp \; A = K$ can be substituted by the following one. For any $\mu$ from some open neighborhood of $\lambda$ and for any nonzero $x \in X$ such that $Tx = \mu x$ we have $\varphi (supp \; x) = supp \; x$.
\end{remark}

\begin{theorem} \label{t7} Assume conditions of Theorem~\ref{t1}. Then $\sigma(T,X) \subseteq \sigma(T,Y)$.
\end{theorem}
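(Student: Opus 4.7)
The plan is as follows. By Theorem~\ref{t1}, $\sigma_{ap}(T,X) = \sigma_{ap}(T,Y) \subseteq \sigma(T,Y)$, so it remains only to show $\sigma_r(T,X) \subseteq \sigma(T,Y)$. A preliminary observation is that $0 \notin \sigma_r(T,X)$: if $T$ were bounded below on $X$, a standard localization on the Stonean compact $K$ (testing on $x_n = S^{-1}u_n$ for a positive unit element $u_n$ supported on a shrinking clopen neighborhood on which $|A|$ is small) would force $|A|$ to be bounded below on $K$, whence $A$ is invertible in $Z(X)$ and $T = AS$ is invertible, contradicting $0 \in \sigma(T,X)$. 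So every $\lambda \in \sigma_r(T,X)$ is nonzero.

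Fix such a $\lambda$. I plan to produce a nonzero $x^{\ast} \in X^{\ast}$ with $T^{\ast} x^{\ast} = \lambda x^{\ast}$ and $x^{\ast}|_Y \neq 0$; then $x^{\ast}|_Y$ is a nonzero eigenfunctional of $(T|_Y)^{\ast}$ at $\lambda$, placing $\lambda$ in $\sigma_p((T|_Y)^{\ast}) \subseteq \sigma(T,Y)$. The eigenfunctional is furnished by Lemma~\ref{l2}: since $\lambda I - T$ is bounded below on $X$, the equivalent surjectivity $(\lambda I - T^{\ast})X^{\ast} = X^{\ast}$ is exactly the hypothesis of that lemma applied to $T^{\ast} = S^{\ast}A^{\ast}$ on the Dedekind complete Banach lattice $X^{\ast}$. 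The form of $T^{\ast}$ is covered by Remark~\ref{r1}(1), and any gap in the support condition on $A^{\ast}$ is to be absorbed by the relaxation in Remark~\ref{r1}(2). The lemma then yields pairwise disjoint nonzero $u_i^{\ast} \in X^{\ast}$ ($i \in \mathds{Z}$) with $T^{\ast} u_i^{\ast} = \lambda u_{i+1}^{\ast}$ and $\sum_i \|u_i^{\ast}\| < \infty$; setting $x^{\ast} := \sum_i u_i^{\ast}$ yields a nonzero $T^{\ast}$-eigenfunctional at $\lambda$.

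The main step is verifying $x^{\ast}|_Y \neq 0$, which I plan to establish by contradiction. Suppose $x^{\ast}|_Y = 0$. A preliminary consequence of AIM, obtained by composing an AIM-approximation with a band projection $P$, is that $\{|Py| : y \in Y\}$ is norm-dense in $(PX)_+$ for every band projection $P$ in $X$. Using this together with the pairwise disjointness of the carrier bands of the $|u_i^{\ast}|$ (so that $|x^{\ast}| = \sum_i |u_i^{\ast}|$) and the shift relation $T^{\ast} u_i^{\ast} = \lambda u_{i+1}^{\ast}$ combined with $TY \subseteq Y$, the aim is to promote the scalar identity $x^{\ast}(y) = 0$ ($y \in Y$) into the modulus identity $|x^{\ast}|(|y|) = 0$ ($y \in Y$); the AIM-density then forces $|x^{\ast}|(x_+) = 0$ for every $x_+ \in X_+$, hence $x^{\ast} = 0$, a contradiction. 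The hard part is precisely this promotion from complex cancellation to modulus vanishing; I expect it to require a Ces\`aro / Halmos--Rokhlin-style averaging argument parallel to that of Theorem~\ref{t1} (specifically the derivation leading to (6) there), now performed on eigenfunctionals in $X^{\ast}$ and exploiting the one-sided $T$-invariance of $Y$ to isolate individual $u_i^{\ast}$-components of $x^{\ast}$.
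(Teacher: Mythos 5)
Your skeleton matches the paper's: reduce via Theorem~\ref{t1} to $\lambda\in\sigma_r(T,X)$, use Lemma~\ref{l2} (through Remark~\ref{r1}) to produce a disjoint shift-sequence $\{u_i\}$ of functionals in $X^\star$ with $T^\star u_i=u_{i+1}$ and summable norms, and then use the $AIM$ property to derive a contradiction. But the two steps you yourself flag as open are exactly the substance of the proof, and the mechanisms you guess for them are not the ones that work. First, a single relation $\langle y,\sum_i\lambda^{-i}u_i\rangle=0$ (vanishing of one eigenfunctional on $Y$) cannot isolate the coefficient $\langle y,u_0\rangle$; the paper first reduces, via Proposition 12.15 of \cite{AAK} and the passage to $Y_2=\mathrm{cl}\{P_2y:y\in Y\}$ (checked to be $AIM$ in $X_2$ with $\Gamma\cap\sigma(T,Y_2)=\emptyset$), to the situation where the \emph{entire circle} $\Gamma$ lies in $\sigma_r(T,X)\setminus\sigma(T,Y)$. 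Only then does one get the whole family $w_\gamma=\sum\gamma^{-i}v_i$, $\gamma\in\Gamma$, of eigenfunctionals annihilating $Y$, and Fourier integration over $\Gamma$ extracts $\langle y,v_0\rangle=0$. Your proposal never performs this reduction and so has no way to pass from one orthogonality relation to infinitely many.

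Second, the ``promotion from complex cancellation to modulus vanishing'' is not a Ces\`aro/Halmos--Rokhlin averaging at all: the paper writes $|y|=fy$ with $f\in Z(X)$ unimodular and twists the sequence by $v_i=(S^\star)^iF(S^\star)^{-i}u_i$ with $F=f^\star$, which preserves both the norms and the shift relation $T^\star v_i=v_{i+1}$, so that the extracted coefficient is literally $\langle y,v_0\rangle=\langle|y|,u_0\rangle$; then $AIM$ gives $u_0=0$. Without this twist the argument does not close. Finally, your appeal to Remark~\ref{r1}(2) to ``absorb'' the support hypothesis of Lemma~\ref{l2} glosses over a genuine case: when some dual eigenvector $u$ has $\mathrm{supp}\,u=E\subsetneqq\psi^{-1}(E)$, the two-sided sequence of the lemma is unavailable, and the paper handles this separately with a one-sided sequence $u_n$, $T^\star u_n=u_{n-1}$, and an analytic function $W(\alpha)=\langle y,w(\alpha)\rangle$ on the open disk that vanishes on an open subset of the resolvent set and hence identically. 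As written, your proposal is an accurate plan with the three decisive ideas (circle reduction, unimodular twist, non-invariant-support case) missing or only conjectured, so it does not constitute a proof.
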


\begin{proof} Assume to the contrary that there is $\lambda \in \sigma(T,X) \setminus \sigma(T,Y)$. By Theorem~\ref{t1} $\lambda \in \sigma_r(T,X)$ and therefore we can assume without loss of generality that $\lambda = 1$. Then $\sigma_{ap}(T,X) \cap \Gamma \neq \Gamma$ and it follows from~\cite[Proposition 12.15]{AAK} that $X$ is the direct sum of two disjoint $T$-invariant bands $X_1$ and $X_2$ (band $X_1$ might be empty) such that some power of $T$ restricted on $X_1$ is a multiplication operator and $\Gamma \subset \sigma_r(T,X_2)$. Consider the subspace $Y_2$ of $X$
defined as $Y_2 = cl \;\{P_2 y : \; y \in Y \}$. Clearly $TY_2 \subseteq Y_2$. It is easy to see that $Y_2$ is an $AIM$ subspace of $X_2$. Indeed, let $x \in X_2$ and let $y_n \in Y$ be such that $|y_n| \rightarrow |x|$. Then $|P_2y_n| \rightarrow |P_2x| = |x|$. Moreover, $\Gamma \cap \sigma(T,Y_2) = \emptyset$. To see it notice that for any $\gamma \in \Gamma$ the operator $\gamma I -T$ is bounded from below on $Y_2$ because $\gamma \in \sigma_r(T,X_2)$. On the other hand consider $y \in Y$. Because the operator $I - T$ is invertible on $Y$ there is $z \in Y$ such that $z - Tz = y$ whence $P_2 z - TP_2 z = P_2y$. Thus the operator $(I - T)|Y_2$ is bounded from below and its image contains a dense subspace of $Y_2$ whence it is invertible on $Y_2$. It remains to notice that because the resolvent set and the residual spectrum of a bounded operator are open in $\mathds{C}$ the operator $(\gamma I - T)|Y_2$ is invertible for any $\gamma \in \Gamma$.

The previous paragraph shows that without loss of generality we can assume that $\Gamma \subset \sigma_r(T,X)$ and $\Gamma \cap \sigma(T,Y) = \emptyset$. Then $\Gamma \subset \sigma_r(T^{\star \star})$. Assume first that for any $\mu$ in some open neighborhood of $1$ and for any nonzero $u \in X^\star$ such that $T^\star u = \mu u$ we have $\psi(supp \; u) = supp \; u$ where $\psi$ is the homeomorphism of the Stonean space $Q$ of $X^\star$ generated by the isomorphism $f \rightarrow (S^\star)^{-1})fS^\star, \; f \in Z(X^\star)$. Then by Lemma~\ref{l2} and Remark~\ref{r1} there are pairwise disjoint elements $u_i \in X^\star, i \in \mathds{Z}$ such that $u_0 \neq 0$, $T^\star u_i = u_{i+1}, i \in \mathds{Z}$, and $\sum \limits_{i= - \infty}^\infty \|u_i\| < \infty$. Let $y \in Y$, then $|y| = fy$ where $f \in Z(X)$ and $|f| \equiv 1$. Let $F = f^\star \in Z(X^\star)$ and let
$v_i = (S^\star)^i F (S^\star)^{-i} u_i$. it is immediate to see that $\|v_i\| = \|u_i\|$ and that $T^\star v_i = v_{i+1}, i \in \mathds{Z}$. For any
$\gamma \in \Gamma$ let $w_\gamma = \sum \limits_{i= - \infty}^\infty \gamma^{-i} v_i$. Then $T^\star w_\gamma = \gamma w_\gamma$ and therefore (because $\sigma(T,Y) \cap \Gamma = \emptyset)$ we have
$$ \forall \gamma \in \Gamma \; \langle y, w_\gamma \rangle = \sum \limits_{i= - \infty}^\infty  \gamma^{-i} \langle y, v_i \rangle =0. $$
Therefore
$$ \langle |y|, u_0 \rangle = \langle y, v_0 \rangle = \int \limits_0^{2\pi} \big{(} \sum \limits_{i= - \infty}^\infty  e^{-i\theta} \langle y, v_i \rangle \big{)} d\theta = 0.$$
But $Y$ is an $AIM$ subspace of $X$ whence $u_0 =0$, a contradiction.

It remains to consider the case when there are $\lambda \neq 0$ and $u \in X^\star \setminus \{0\}$ such that $\lambda \not \in \sigma(T,Y)$,  $T^\star u = \lambda u$, and
$E \subsetneqq \psi^{-1}(E)$ where $E = supp \; u$. Notice that $s^\star A^\star (S^\star)^{-1} \equiv 0$ on $\psi^{-1}(E) \setminus E$. Without loss of generality we can assume that $\lambda = 1$.
Let $F = E \setminus \psi(E)$. Notice that $\psi^i(F) \cap \psi^j(F) = \emptyset, i \neq j, \; i,j \in \mathds{Z}$. Let $u_n = P_n u, n \in \mathds{N}$, where $P_n$ is the band projection corresponding to the set $\psi^n(F)$. Then $T^\star u_n = u_{n-1}, n \in \mathds{N}$, $u_0 \neq 0$, and $T^\star u_0 = 0$. Let $y \in Y$ and introduce $v_n, \; n=0,1, \ldots$ like in the previous part of the proof. For any $\alpha \in \mathds{C}$ such that $|\alpha| < 1$ the series
 $w(\alpha) = \sum \limits_{n=0}^\infty \alpha^i v_i$ converges in norm and $T^\star w(\alpha) = \alpha w(\alpha)$
 The function $W(\alpha) = \langle y,w(\alpha) \rangle$ is analytic in the open unit disk and identically zero on the intersection of the unit disk and the resolvent set of $T|Y$ whence it is identically zero and $\langle |y|, u_0 \rangle = \langle y, v_0 \rangle =0$. We conclude that $u_0 = 0$, a contradiction.

\end{proof}

\begin{definition} \label{d2} Let $X$ be a Banach lattice and $Y$ be a closed subspace of $X$. We will say that $Y$ is an analytic subspace of $X$ if for any nonzero band $E$ in $X$ the implication holds $y \in Y , y \perp E \Rightarrow y =0$.
\end{definition}

\begin{theorem} \label{t2} Let $Y$ be an analytic $AIM$ subspace of a Banach lattice $X$. Let $S$ be a $d$-isomorphism of $X$, $A \in Z(X)$, $T = AS$, and $TY \subseteq Y$. Then the set $|\sigma(T,Y)| = \{|\lambda| : \lambda \in \sigma(T,Y) \}$ is a connected subset of $\mathds{R}$.
\end{theorem}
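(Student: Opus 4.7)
My plan is to argue by contradiction. Assume $|\sigma(T,Y)|$ is not connected, so there exists $\rho > 0$ such that the circle $\Gamma_\rho = \{|\lambda| = \rho\}$ misses $\sigma(T,Y)$ while both $\{|\lambda| < \rho\}$ and $\{|\lambda| > \rho\}$ meet it. Rescaling ($T \mapsto T/\rho$) I may take $\rho = 1$, writing $\Gamma = \Gamma_1$. By Theorem~\ref{t7} the circle $\Gamma$ is disjoint from $\sigma(T,X)$ as well, and hence from $\sigma_{ap}(T,X)$.

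First I would reduce to the case of a central operator. As in the proof of Theorem~\ref{t7}, Proposition~12.15 of~\cite{AAK} supplies a decomposition $X = X_1 \oplus X_2$ into disjoint $T$-invariant bands with $T^m|X_1$ central for some $m \in \mathds{N}$ and $\Gamma \subseteq \sigma_r(T,X_2)$. Since $\sigma_r(T,X_2) \subseteq \sigma(T,X_2) \subseteq \sigma(T,X)$ and $\Gamma \cap \sigma(T,X) = \emptyset$, the inclusion $\Gamma \subseteq \sigma_r(T,X_2)$ is compatible only with $X_2 = \{0\}$; thus $M := T^m$ is central on $X$. Identifying $Z(X)$ with $C(K)$ for the Stonean compact $K$, $M$ is multiplication by a function in $C(K)$, $MY \subseteq Y$, and $Y$ is still analytic and $AIM$ in $X$.

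The problem reduces to the following sub-claim: \emph{whenever $M \in Z(X)$ and $Y$ is an analytic $AIM$-subspace of $X$ with $MY \subseteq Y$, the spectrum $\sigma(M,Y)$ is connected.} Granted the sub-claim, the spectral mapping theorem gives $|\sigma(T,Y)|^m = |\sigma(T^m,Y)| = |\sigma(M,Y)|$ connected, hence $|\sigma(T,Y)|$ itself is connected (since $t \mapsto t^{1/m}$ is a homeomorphism of $[0,\infty)$), contradicting the initial assumption.

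For the sub-claim I would argue by contradiction as well. Pick a piecewise-smooth Jordan curve $\gamma$ disjoint from $\sigma(M,Y)$ that separates it into non-empty pieces. Theorem~\ref{t7} applied with $S = I$ and $A = M$ yields $\gamma \cap \sigma(M,X) = \emptyset$. The crucial observation is that, because $M$ is central, the pointwise Cauchy integral $(2\pi i)^{-1}\oint_\gamma (\lambda - M(k))^{-1}\,d\lambda = \chi_E(k)$ where $E := M^{-1}(\mathrm{int}\,\gamma)$ is a clopen subset of $K$, so the Riesz projection of $M$ on $X$ is the \emph{band} projection $\tilde P = \chi_E$. Since $\gamma$ lies in the resolvent set of $M|Y$, a standard argument shows $\tilde P$ preserves $Y$ and $\tilde P|_Y$ coincides with the Riesz projection $P$ of $M|Y$ on the inner spectral component. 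Disconnectedness forces both $PY \subseteq X_E$ and $(I-P)Y \subseteq X_{K \setminus E}$ to be non-zero, whence both bands $X_E$ and $X_{K \setminus E}$ are non-trivial; analyticity of $Y$ then gives $Y \cap X_E = Y \cap (X_{K \setminus E})^d = \{0\}$, contradicting $\{0\} \ne PY \subseteq Y \cap X_E$. The main obstacle I expect in execution is precisely this last step — passing from the spectral splitting of $M|Y$ to a band splitting of $X$ and then using analyticity of $Y$ to abort it.
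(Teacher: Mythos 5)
Your opening move (apply Theorem~\ref{t7} to get $r\Gamma \cap \sigma(T,X) = \emptyset$) and your closing move (split $X$ into two disjoint $T$-invariant bands carrying the inner and outer parts of the spectrum, note $Y_i \subseteq X_i$, and let analyticity of $Y$ kill one of them) are exactly the paper's proof. The gap is in the middle: the reduction to a central operator via Proposition~12.15 of~\cite{AAK} is a misapplication. In the proof of Theorem~\ref{t7} that proposition is invoked in the situation $1 \in \sigma_r(T,X)$, i.e.\ when $\Gamma$ meets $\sigma(T,X)$ but $\sigma_{ap}(T,X) \cap \Gamma \neq \Gamma$; its dichotomy does not hold when $\Gamma$ lies entirely in the resolvent set. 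Concretely, take $X = L^1(\Gamma) \oplus L^1(\Gamma)$ and $T = \tfrac12 U \oplus 2U$ with $U$ an ergodic rotation: then $\sigma(T,X) = \tfrac12\Gamma \cup 2\Gamma$ misses $\Gamma$, no power of $T$ is central on any nontrivial band, and no band $X_2$ satisfies $\Gamma \subseteq \sigma_r(T,X_2)$. So you cannot conclude $X_2 = \{0\}$, hence cannot conclude that $T^m$ is central; your sub-claim about central $M$, while correct in itself, treats a special case to which the general situation need not reduce.

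The fix is that no reduction is needed: in the regime $r\Gamma \cap \sigma(T,X) = \emptyset$ the relevant citation is Theorem~13.1 (see also Proposition~13.3) of~\cite{AAK}, which states directly that the two Riesz spectral subspaces of $T$ on $X$ determined by the circle $r\Gamma$ are mutually disjoint bands $X_1 \perp X_2$ --- this is precisely the band-projection fact you were manufacturing by hand, via the Cauchy integral, for a central $M$, but it holds for arbitrary weighted $d$-isomorphisms. With that in hand your final step goes through verbatim and coincides with the paper's argument: $Y_1 \subseteq X_1$ and $Y_2 \subseteq X_2$ are both nonzero, so a nonzero element of $Y_1$ is disjoint from the nonzero band $X_2$, contradicting Definition~\ref{d2}.
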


\begin{proof} Assume to the contrary that there is a positive $r$ such that $\sigma(T,Y) = \sigma_1 \cup \sigma_2$ where $\sigma_1$ and $\sigma_2$ are nonempty subsets of $\mathds{C}$,
$\sigma_1 \subset \{\lambda \in \mathds{C} : \; |\lambda| < r \}$ and $\sigma_2 \subset \{\lambda \in \mathds{C} : \; |\lambda| > r \}$. Let $Y_1$ and $Y_2$ be the corresponding spectral subspaces of the operator $T|Y$. By Theorem~\ref{t7} $\sigma(T,X) \cap r\Gamma = \emptyset$. Let $X_1$ and $X_2$ be the corresponding spectral subspaces. Then clearly $Y_i \subseteq X_i, \; i=1,2$. It follows from~\cite[Theorem 13.1]{AAK} that \footnote{Strictly speaking we should apply Theorem 13.1 from~\cite{AAK} to the operator $T^{\star \star}$ but it provides the desired result.}  $X_1 \perp X_2$ whence $Y_1 \perp Y_2$ in contradiction with our assumption that $Y$ is an analytic subspace of $X$.
\end{proof}

\begin{corollary} \label{c2} Assume conditions of Corollary~\ref{c1} and assume additionally that $Y$ is an analytic subspace of $X$. Then $\sigma(T,Y)$ is either a circle or an annulus centered at $0$.

\end{corollary}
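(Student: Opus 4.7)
My plan is to derive this as an immediate consequence of Corollary~\ref{c1} together with Theorem~\ref{t2}. First I would verify that the hypotheses are aligned: Corollary~\ref{c2} assumes everything of Corollary~\ref{c1} (which in turn imports the hypotheses of Theorem~\ref{t1} plus pairwise disjointness of the powers $T^n$ in $L_r(X)$) and adds that $Y$ is analytic — which is precisely the additional ingredient required to apply Theorem~\ref{t2}. So both theorems are directly at hand with no further verification.

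Next I would apply Corollary~\ref{c1} to conclude that $\sigma(T,Y)$ is invariant under multiplication by every $\gamma \in \Gamma$, i.e.\ rotation invariant about the origin. Simultaneously, Theorem~\ref{t2} gives that $|\sigma(T,Y)| = \{|\lambda| : \lambda \in \sigma(T,Y)\}$ is a connected subset of $[0,\infty)$. Since $\sigma(T,Y)$ is a nonempty compact subset of $\mathbb{C}$, its image under the continuous modulus map is compact, and being connected it must be a closed interval $[a,b]$ with $0 \le a \le b$.

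Finally I would invoke the elementary geometric fact that a rotation-invariant subset $E$ of $\mathbb{C}$ with $|E| = [a,b]$ is necessarily the closed annulus $\{z \in \mathbb{C} : a \le |z| \le b\}$: each circle $\{|z|=r\}$ with $r \in [a,b]$ contains at least one point of $E$, and rotation invariance forces the whole circle into $E$, while no point outside this annulus can appear. When $a<b$ this is a (proper) annulus; when $0 < a = b$ it is the circle $\{|z|=b\}$. There is no genuine obstacle in this argument — the only borderline case is $a=b=0$, i.e.\ $T|Y$ quasinilpotent, in which case $\sigma(T,Y) = \{0\}$ and one regards this as a degenerate circle of radius zero. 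Thus the whole corollary reduces to combining the previous two results with this trivial planar observation.
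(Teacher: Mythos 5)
Your proposal is correct and is exactly the intended argument: the paper states Corollary~\ref{c2} without proof precisely because it follows immediately from combining the rotation invariance given by Corollary~\ref{c1} with the connectedness of $|\sigma(T,Y)|$ from Theorem~\ref{t2}, plus the elementary observation that a rotation-invariant compact set whose modulus set is an interval is a circle or annulus. Your handling of the degenerate cases is also appropriate.
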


Now we will consider some examples.

\begin{example} \label{e1} Let $(\Gamma, m)$ be the unit circle with the normalized Lebesgue measure $m$. Let $X$ be a Dedekind complete Banach lattice such that $L^\infty (m) \subseteq X \subseteq L^1(m)$. Let $Y = \{x \in X : \; \int \limits_0^{2\pi} x(e^{i\theta}) e^{in\theta}dm =0, \; n \in \mathds{N} \}$. Let  $T$ be a $d$-isomorphism of $X$ such that $TY \subseteq Y$ and the powers $T^n, \; n=0, 1, \ldots $ are pairwise disjoint. Then $\sigma(T,Y)$ is either a circle or an annulus centered at $0$.
\end{example}

\begin{proof}First notice that $Y$ is a closed subspace of $X$ because $X \subseteq L^1(m)$. To prove that $Y$ is an $AIM$ of $X$ let us consider
$x \in X$ and the sequence $z_n = |x| + \frac{1}{n} \mathbf{1}, n \in \mathds{N}$ where $\mathbf{1}$ is the constant function identically equal to 1. Then,
because $L^\infty(m) \subseteq X$ we have $z_n \in X$ and $z_n \mathop \to \limits_X  |x|$. Notice now that $\ln{z_n} \in L^1(m)$ and therefore (see e.g.~\cite[page 53]{Ho}) there are $h_n \in H^1$ such that $|h_n| = z_n$. Then $h_n \in Y$ and therefoe $Y$ is an $AIM$ subspace of $X$. It remains to notice that $Y$ is an analytic subspace of $X$, because $Y \subseteq H^1$, and apply Corollary~\ref{c2}.

\end{proof}

\begin{example} \label{e2} Let $p \in \mathds{N}$. Let $\Omega$ be either $(\Gamma^p, m)$ - the $p$-dimensional unit torus with the Haar measure $m$ or $(B_p, m)$ the unit ball in $\mathds{C}^p$ with the normalized Lebesgue measure $m$ (see~\cite[1.4.1]{Ru1}).  Let $X$ be a Banach lattice with order continuous norm such that $L^\infty (\Omega) \subseteq X \subseteq L^1(\Omega)$. Let $Y = X \cap H^1(\Omega)$. Let  $T$ be a $d$-isomorphism of $X$ such that $TY \subseteq Y$ and the powers $T^n, \; n=0, 1, \ldots $ are pairwise disjoint. Then $\sigma(T,Y)$ is either a circle or an annulus centered at $0$.
\end{example}

\begin{proof}  Let $x \in X$. Because the norm in $X$ is order continuous $|x|$ can be approximated by norm in $X$ by strictly positive functions $z_n$ from $C(\Omega)$. There are $h_n \in H^\infty(\Omega)$ such that $|h_n| = z_n$. In the case of polydisc it follows from Theorem 3.5.3 in~\cite{Ru} and in the case of the unit ball in $\mathds{C}^p$ from the deep results of Alexandrov~\cite{Al} and L{\o}w~\cite{Lo} (see also~\cite{Ru2}). The rest of the proof goes like in Example~\ref{e1}.

\end{proof}

The condition that a subspace of a Banach lattice is $AIM$ is quite restrictive and in the following part of the paper we will weaken it but at the price of imposing additional conditions either on the space $X$ or on the operator $T$.

Recall that if $X$ is an arbitrary Banach lattice and $T$ is an order continuous operator on $X$ then $T$ can be extended in the unique way on the Dedekind completion $\hat{X}$ of $X$ (see~\cite{Ve}).

\begin{theorem} \label{t8} Let $X$ be a Banach lattice, $S$ be a $d$-isomorphism of $X$, $A \in Z(X)$, and $T = AS$. Let $\hat{X}$ be the Dedekind completion of $X$ and $\hat{T}= \hat{A} \hat{S}$ be (the unique) extension of $T$ on $\hat{X}$. Assume also that $\hat{X}$ has the weak sequential Fatou property.
Then $\sigma_{ap}(T,X) = \sigma_{ap}(\hat{T}, \hat{X})$.
\end{theorem}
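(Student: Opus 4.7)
The inclusion $\sigma_{ap}(T,X) \subseteq \sigma_{ap}(\hat T,\hat X)$ is immediate from the isometric embedding $X \hookrightarrow \hat X$ and the fact that $\hat T$ extends $T$. For the reverse inclusion, the plan is to retrace the proof of Theorem~\ref{t1} inside $\hat X$, replacing the AIM-condition used there by two weaker facts available in the present setting: (a) $X$ is order-dense in $\hat X$, so every nonzero band of $\hat X$ meets $X_+$ nontrivially; and (b) by the weak sequential Fatou property there is a constant $K \geq 1$ such that for every $\hat u \in \hat X_+$ one can find $x \in X_+$ with $x \leq \hat u$ and $\|x\|$ within factor $K$ of $\|\hat u\|$ (up to arbitrarily small slack, after passing to a countable cofinal subsequence of an order-dense approximating net).

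The case $\lambda = 0$ is quick: from $\|\hat T\hat y\| = \||\hat T||\hat y|\|$, pass to $|\hat x_n|$ and use (a)+(b) to find $y_n \in X_+$ with $y_n \leq |\hat x_n|$ and $\|y_n\| \geq 1/(2K)$; then $|T|y_n = |\hat T|y_n \leq |\hat T||\hat x_n|$ gives $\|T y_n\| \to 0$, so $0 \in \sigma_{ap}(T,X)$. For $\lambda \neq 0$, normalize to $\lambda = 1$ and apply Frolik's theorem to the homeomorphism $\hat\varphi$ of the Stonean space of $\hat X$, splitting into the periodic/aperiodic dichotomy of Theorem~\ref{t1}. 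In the periodic case, $\hat T^m \hat P_m$ is multiplication by a function $\hat M$ equal to $1$ at some point, and one picks a clopen neighborhood $V$ of that point with $\hat\varphi^{-j}(V)$ pairwise disjoint for $0 \leq j \leq m-1$. Here (a) directly produces $x \in X_+$ supported entirely in the $V$-band, completely bypassing the AIM-approximation of Theorem~\ref{t1} since now $\hat P_V x = x$ automatically. Then $v = \sum_{j=0}^{m-1} T^j x \in X$ satisfies $\|v\| \geq \|x\|$ by support-disjointness, while $\|T v - v\| = \|T^m x - x\| = \|(\hat M - 1) x\| \leq \delta \|x\|$, so $1 \in \sigma_{ap}(T,X)$.

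The aperiodic case is the crux. One constructs $\hat E_n$, $\hat P_n$, $\hat z_n$ and $g_n = \sum_{j=0}^{2N}(1 - 1/\sqrt N)^{|j-N|} T^j y_n$ as in Theorem~\ref{t1}. If $y_n \in X_+$ is taken below $|\hat P_n \hat z_n|$, then $y_n$ lies in the $\hat E_n$-band and the disjointness of $\hat\varphi^{-j}(\hat E_n)$ for $0 \leq j \leq 2N$ turns the cross-term cancellation of Theorem~\ref{t1}'s inequality~(6) into an exact identity rather than a limit; the upper bounds $\limsup \|y_n\|, \limsup \|T^{2N+1} y_n\| \leq s$ follow from monotonicity of $|T|^j$. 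The delicate step is securing a lower bound $\|g_n\| \geq \|T^N y_n\|$ comparable to $s$: Theorem~\ref{t1} obtained $\|T^N y_n\| \to s$ from the AIM-closeness of $|y_n|$ to $|\hat P_n \hat z_n|$ in norm, which we no longer have. To recover it, exploit that $|T|^N$ is order-continuous on $\hat X$ (a consequence of the hypothesis that $\hat T$ exists uniquely): any net $x^{(n)}_\alpha \uparrow |\hat P_n \hat z_n|$ in $X_+$ gives $|T|^N x^{(n)}_\alpha \uparrow |T|^N |\hat P_n \hat z_n|$ in $\hat X$, and applying (b) to this latter net in the band $\hat\varphi^{-N}(\hat E_n) \hat X$ yields $\alpha_n$ with $\||T|^N x^{(n)}_{\alpha_n}\| \geq \|T^N \hat P_n \hat z_n\|/K - 1/n$. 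Setting $y_n := x^{(n)}_{\alpha_n}$, one obtains $\|g_n\| \geq \|T^N y_n\| \geq s/K - o(1)$, and Theorem~\ref{t1}'s concluding estimate becomes $\|T g_n - g_n\|/\|g_n\| \leq 1/\sqrt N + 2(1 - 1/\sqrt N)^N K$, which tends to $0$ as $N \to \infty$, yielding $1 \in \sigma_{ap}(T,X)$. The main obstacle is precisely this norm-lifting in the aperiodic case: extracting a countable sub-net of $X_+$-approximations whose image under $|T|^N$ still carries most of the norm of $|T|^N|\hat P_n\hat z_n|$ is where the weak sequential Fatou hypothesis does its essential work.
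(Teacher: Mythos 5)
Your proposal is correct and follows the same overall route as the paper: the inclusion $\sigma_{ap}(T,X)\subseteq\sigma_{ap}(\hat{T},\hat{X})$ is immediate, and the reverse inclusion is obtained by rerunning the periodic/aperiodic dichotomy from the proof of Theorem~\ref{t1} inside $\hat{X}$, with order density of $X$ in $\hat{X}$ replacing the $AIM$ hypothesis in the periodic case and a Fatou-type norm recovery in the aperiodic case. The one place you genuinely diverge is the crux you correctly isolate: the paper chooses $v_n\in X$ with $|v_n|\le|u_n|$ and $\|v_n\|>\|u_n\|-1/n$ and asserts that the estimates of Theorem~\ref{t1} carry over, but the lower bound actually needed is on $\|T^N v_n\|$, and a minorant capturing the norm of $u_n$ need not have its image under $|T|^N$ capture any of the norm of $|T|^N|u_n|$ (e.g.\ in an $L^\infty$-type band where $A_N$ is small on the support of $v_n$). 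Your fix --- applying the Fatou property to the increasing system $|T|^N x_\alpha\uparrow|T|^N|u_n|$, legitimate because $|T|^N$ is order continuous, so as to select $y_n$ with $\||T|^N y_n\|\ge s/K-o(1)$ --- is precisely what secures $\liminf\|g_n\|\ge s/K$ and makes the final ratio estimate close; at that point your argument is more careful than the written proof. Carrying the weak-Fatou constant $K$ through the estimates instead of renorming to the honest Fatou norm, as the paper does, is an equivalent cosmetic choice. The one caveat you share with the paper is the passage from the \emph{sequential} Fatou hypothesis to an upward directed \emph{net} of minorants from $X$: your parenthetical about extracting a ``countable cofinal subsequence'' is not automatic for a general directed set and should either be justified or absorbed into the renorming step.
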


\begin{proof} Assume first that $\hat{X}$ has the sequential Fatou property. The inclusion $\sigma_{ap}(T,X) \subseteq \sigma_{ap}(\hat{T}, \hat{X})$ is trivial. If $0 \in \sigma_{ap}(\hat{T}, \hat{X})$ then $\hat{A}$ is not invertible in $Z(\hat{X})$ and it is routine to see that $0 \in \sigma_{ap}(T,X)$. Thus it is enough to prove that if $1 \in \sigma_{ap}(\hat{T}, \hat{X})$ then $1 \in \sigma_{ap}(T,X)$. We will use the notations from the proof of Theorem~\ref{t1}. Consider $x_n \in \hat{X}$ such that $\|x_n\| = 1$ and $Tx_n - x_n \rightarrow 0$. Assume that there is a natural $m$ such that $\|P_m x_n \| \mathop \nrightarrow \limits_{n \to \infty} 0$. Then there is a point $t \in G_m$ such that $\hat{A}_m(t) = 1$. For every $k \in \mathds{N}$ we can a clopen subset $V_k$ of $G_m$ such that the sets $V_k, \varphi^{-1}(V_k), \ldots , \varphi^{-(m-1)}(V_k)$ are pairwise disjoint and $|\hat{A}_m(k) -1| < 1/k, k \in V_k$. Let $x_k \in X$ be such that $\|x_k\| = 1$ and $x_k$ belongs to the band in $X$ corresponding to the set $V_k$. Let $y_k = \sum \limits_{i=0}^{m-1} T^i x_k$. Then it is immediate to see that $\|Ty_k -y_k\| = o(\|y_k\|), k \rightarrow \infty$.

Now assume that $P_m x_n \mathop  \rightarrow \limits_{n \to \infty}  0$ for any $m \in \mathds{N}$. Let $N$, $z_n$, $E_n$, and $P_n$ be as in the proof of Theorem~\ref{t1}. Let $u_n = P_n z_n$ and
 $$ g_n = \sum \limits_{j= 0}^{2N} \big{(} 1 - \frac{1}{\sqrt{N}} \big{)}^{|j - N|} T^j u_n . $$
Because $\hat{X}$ has the Fatou property for every $n \in \mathds{N}$ there is $v_n \in X$ such that $|v_n| \leq |u_n|$ and $\|v_n\| > \|u_n\| - 1/n$.
Let
 $$ h_n = \sum \limits_{j= 0}^{2N} \big{(} 1 - \frac{1}{\sqrt{N}} \big{)}^{|j - N|} T^j v_n . $$
 Then like in the proof of Theorem~\ref{t1} we can see that
 $$ \limsup \|Th_n - h_n \| \leq \big{[}\frac{1}{\sqrt{N}} +2\big{(}1 - \frac{1}{\sqrt{N}}\big{)}^N \big{]} \liminf \|h_n\|.$$

 Assume now that $\hat{X}$ has only the weak sequential Fatou property. Then the formula $|||x||| = \inf \lim \limits_{n \to \infty} \|x_n\|$, where the infimum is taken over all the sequences $\{x_n\}$ of elements of $\hat{X}$ such that $|x_n| \uparrow |x|$, defines an equivalent norm on $\hat{X}$ with the Fatou property. Because changing a norm to an equivalent one does not change the spectrum of a bounded linear operator we are done.
\end{proof}

\begin{theorem} \label{t9} Assume conditions of Theorem~\ref{t8}. Then $\sigma(\hat{T}, \hat{X}) \subseteq \sigma(T,X)$.
\end{theorem}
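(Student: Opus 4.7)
The plan is to mirror the residual-spectrum adjoint argument used in the proof of Theorem~\ref{t7}, with the pair $(\hat X, X)$ here playing the role that $(X,Y)$ played there, and with the sequential Fatou property of $\hat X$ replacing the AIM hypothesis at the final step. Suppose, for contradiction, that $\lambda \in \sigma(\hat T,\hat X) \setminus \sigma(T,X)$. Theorem~\ref{t8} gives $\sigma_{ap}(\hat T,\hat X) = \sigma_{ap}(T,X)$, which forces $\lambda \in \sigma_r(\hat T,\hat X)$. Using the renorming trick from the end of the proof of Theorem~\ref{t8} I may assume $\hat X$ has the sequential Fatou property, and rescaling take $\lambda = 1$. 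Then \cite[Proposition~12.15]{AAK} applied to $\hat T$ yields a $\hat T$-invariant band decomposition $\hat X = \hat X_1 \oplus \hat X_2$ in which some power of $\hat T|_{\hat X_1}$ is central and $\Gamma \subset \sigma_r(\hat T|_{\hat X_2}, \hat X_2)$. Setting $X_2 := \overline{\hat P_2 X}$ (with $\hat P_2$ the central projection onto $\hat X_2$), the argument applied to $Y_2$ in Theorem~\ref{t7} shows $TX_2 \subseteq X_2$, that Fatou-approximation of $\hat X_2$ by $X_2$ is inherited, and that $\Gamma \cap \sigma(T,X_2) = \emptyset$. Thus without loss of generality $\Gamma \subset \sigma_r(\hat T,\hat X)$ and $\Gamma \cap \sigma(T,X) = \emptyset$.

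Because $\hat T^* = \hat S^* \hat A^*$ is again of the form ``central operator times $d$-isomorphism'' on the Dedekind complete lattice $\hat X^*$, Lemma~\ref{l2} together with Remark~\ref{r1} applies to $\hat T^*$ at $\lambda = 1$ and produces pairwise disjoint nonzero $u_i \in \hat X^*$ with $\hat T^* u_i = u_{i+1}$ and $\sum_i \|u_i\| < \infty$. Following the proof of Theorem~\ref{t7} verbatim: for each $x \in X$, write $|x| = f x$ with $f \in Z(X)$, $|f|\equiv 1$, extend $f$ to $\hat f \in Z(\hat X)$, and set $v_i = (\hat S^*)^i \hat f^* (\hat S^*)^{-i} u_i$. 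For each $\gamma \in \Gamma$, the series $w_\gamma := \sum_i \gamma^{-i} v_i$ converges in norm and is an eigenvector of $\hat T^*$ with eigenvalue $\gamma \notin \sigma(T^*,X^*)$; restricting to $X$ forces $w_\gamma|_X = 0$. Fourier inversion in $\gamma$ then yields $v_i|_X = 0$ for every $i$, and evaluating at $i=0$ gives $\langle |x|, u_0 \rangle = \langle x, v_0 \rangle = 0$ for every $x \in X$.

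The main obstacle is to promote $u_0|_{X_+} = 0$ to $u_0 = 0$ in $\hat X^*$ so as to contradict Lemma~\ref{l2}. In Theorem~\ref{t7} the AIM condition did this step automatically, whereas here $X$ is only order-dense, not norm-dense, in $\hat X$. I expect this to be handled by decomposing $u_0 = u_0^n + u_0^s$ along the $\hat T^*$-invariant band decomposition $\hat X^* = (\hat X^*)_n \oplus (\hat X^*)_s$, which $\hat T^*$ preserves because $\hat T$ is order continuous. The sequential Fatou property gives, for each $\hat x \in \hat X_+$, a sequence $x_n \in X_+$ with $x_n \uparrow \hat x$ and $\|x_n\| \uparrow \|\hat x\|$; together with the order-density of $X$ this forces the order-continuous component $u_0^n$ to vanish, while the singular component $u_0^s$ is killed by rerunning the orbit construction inside the $\hat T^*$-invariant singular band and using that singular functionals on a Dedekind completion annihilate the order-dense image of $X$. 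This careful bookkeeping of the order-continuous/singular split is the technically delicate step; everything preceding it is a direct transcription of the proof of Theorem~\ref{t7}.
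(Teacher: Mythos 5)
Your argument tracks the paper only up to a point, and it breaks down exactly at the step you yourself flag as delicate; the repair you sketch does not work. From $\langle |x|,u_0\rangle =0$ for all $x\in X$ you obtain only that $u_0$ lies in the annihilator of $X$ inside $\hat X^\star$, and that annihilator need not be trivial: $X$ is order dense and majorizing in $\hat X$, but in general it is \emph{not} norm dense (e.g.\ $c$ inside $\ell^\infty$, which is its Dedekind completion and whose sup norm has the Fatou property), so no contradiction with $u_0\neq 0$ is reached. The order-continuous/singular splitting does not rescue this, for three reasons: (i) knowing that $u_0=u_0^n+u_0^s$ annihilates $X$ says nothing about whether $u_0^n$ and $u_0^s$ separately annihilate $X$, so even the vanishing of the order-continuous part is not established; (ii) the assertion that singular functionals on $\hat X$ annihilate the image of $X$ is false (a Banach limit on $\ell^\infty$ is singular and equals the ordinary limit on $c$); and (iii) even if $u_0^s$ did annihilate $X$, that would not force $u_0^s=0$ --- the existence of nonzero functionals annihilating $X$ is precisely the obstruction. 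The Fatou property controls norms of elements of $X$ sitting below elements of $\hat X$; it gives no control over the annihilator of $X$ in $\hat X^\star$. A secondary gap: you transcribe only the first half of the Theorem~\ref{t7} argument and omit the case of an eigenvector $u$ of $\hat T^\star$ with $supp\,u\subsetneqq \psi^{-1}(supp\,u)$, where Lemma~\ref{l2} does not apply and one needs the one-sided series and analyticity in the open disk; your reduction via $X_2:=\overline{\hat P_2X}$ is also shakier than needed, since band projections of $\hat X$ need not map $X$ into itself.

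The paper's proof avoids the dual space entirely and is worth comparing. After reducing to the case $\Gamma\cap\sigma(T,X)=\emptyset$ by passing to the quotients $X/B_m$ and $\hat X/\hat B_m$ by the periodic bands (using $\widehat{X/B_m}=\hat X/\hat B_m$), it applies Proposition 13.3 of~\cite{AAK} to $T$ on $X$ itself, splitting $X$ into $T$-invariant bands $X_1$ and $X_2$ with $\rho(T|X_1)<1$ and $\rho\big((T|X_2)^{-1}\big)<1$. Since the Veksler extension preserves operator norms, $\rho(\hat T|\hat X_1)=\rho(T|X_1)$ and $\rho(\hat R)=\rho(R)$ for the inverse $R$ on $X_2$, whence $\Gamma\cap\sigma(\hat T,\hat X)=\emptyset$, contradicting $1\in\sigma(\hat T,\hat X)$. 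To salvage your route you would have to prove that the functionals $u_i$ produced by Lemma~\ref{l2} cannot annihilate $X$, which is a genuinely new statement not supplied by the Fatou property; absent that, the spectral-radius transfer argument is the way to go.
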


\begin{proof} Assume to the contrary that there is $\lambda \in \sigma(\hat{T}, \hat{X}) \setminus \sigma(T,X)$. Then by Theorem~\ref{t8} $\lambda \in
\sigma_r(\hat{T}, \hat{X})$. We can assume that $\lambda = 1$. Then (see~\cite[Chapter 13]{AAK}) there are two possibilities.

\noindent $(1)$ $\Gamma \cap \sigma(T,X) = \emptyset$. Then by Proposition 13.3 in~\cite{AAK} $X$ is the direct sum of two disjoint $T$-invariant bands $X_1$ and $X_2$ such that $\sigma(T,X_1) \subset \{ \lambda \in \mathds{C}: \; |\lambda| < 1\}$ and $\sigma(T,X_2) \subset \{ \lambda \in \mathds{C}: \; |\lambda| > 1\}$. Then $\hat{X}$ is the direct sum of disjoint $\hat{T}$-invariant bands $\hat{X_1}$ and $\hat{X_2}$ and $\rho(\hat{T}|\hat{X_1}) =
\rho(T,X_1) < 1$. The restrictions $T|X_2$ and $\hat{T}|\hat{X_2}$ are a $d$-isomorphisms on $X_2$ and respectively $\hat{X_2}$. Let $R : X_2 \to X_2$ be the inverse of $T|X_2$. Then $\hat{R}$ is the inverse of $\hat{T}|\hat{X_2}$ and $\rho(R) = \rho(\hat{R}) < 1$. Therefore $\sigma(\hat{T},\hat{X}) \cap \Gamma = \emptyset$, a contradiction.

\noindent $(2)$ Assume now that $\sigma(\hat{T},\hat{X}) \cap \Gamma \neq \emptyset$. Let $K$ be the Stonean compact of $\hat{X}$ and $\varphi$ be the homeomorphism of $K$ generated by the operator $\hat{S}$. For any $m \in \mathds{N}$ let $F_m$ be the clopen subset of $K$ that consists of all $\varphi$-periodic points of period $\leq m$, and let $\hat{B}_m$ be the band in $\hat{X}$ corresponding to $F_m$. Let also $B_m = X \cap \hat{B}_m$.
Notice that for any $m$ the bands $B_m$ and $\hat{B}_m$ are invariant for $T$ and respectively $\hat{T}$. Let $\dot{T}_m$ and $\dot{\hat{T}}_m$ be the factor operators on $X/B_m$ and $\hat{X}/ \hat{B_m}$, respectively. Then it follows from Proposition 12.15 in~\cite{AAK} that there is such $m \in \mathds{N}$ that $\sigma(\dot{T}_m) \cap \Gamma = \emptyset$ and $\Gamma \subset \sigma_r(\dot{\hat{T}}_m)$. Because $\widehat{X/B_m} = \hat{X}/\hat{B_m}$ we are now in the conditions of part $(1)$ of the proof and come to a contradiction.
\end{proof}

\begin{remark} \label{r2} It is not known to the author whether the condition that $\hat{X}$ has the weak Fatou property in Theorems~\ref{t8} and~\ref{t9} can be dropped.
\end{remark}

In connection with Theorems~\ref{t8} and~\ref{t9} it might be interesting to obtain a criterion for a point in the spectrum of a weighted $d$-isomorphism $T=AS$ to belong either to $\sigma_{ap}(T)$ or to $\sigma_r(T)$. A complete answer at the present is known only under additional condition that $\sigma(S) \subseteq \Gamma$ (see Theorem~\ref{t3} below and the results that follow it). Nevertheless, Lemma~\ref{l2} allows us to obtain the following result.

\begin{theorem} \label{t10} Let $X$ be a Banach lattice with order continuous norm, $S$ be a $d$-isomorphism of $X$, $A \in Z(X)$, and $T = AS$. Let
$\lambda \in \sigma_r(T)$. Then

\noindent $(1)$ There is a nonzero band $E$ in $X$ such that the bands $S^i E, i \in \mathds{Z}$ are pairwise disjoint.

\noindent $(2)$ $X$ is the union of four disjoint $T$-invariant bands $F_1$, $F_2$, $F_3$, and $F_4$ (any of the bands $F_2$, $F_3$, and $F_4$ can be $0$) such that

\noindent $F_1 = \big{\{} \bigcup \limits_{i= - \infty}^\infty S^i E \big{\}}^{dd}$.

\noindent $\sigma(T|F_2) \subset \{\alpha \in \mathds{C}: |\alpha| < |\lambda|\}$.

\noindent $\sigma(T|F_3) \subset \{\alpha \in \mathds{C}: |\alpha| > |\lambda|\}$.

\noindent Some power of the operator $T|F_4$ is a multiplication operator and $\lambda \not \in \sigma(T|F_4)$.

\end{theorem}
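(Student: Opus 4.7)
The plan is to normalize $\lambda=1$ and proceed in three stages, mirroring the strategy of the proof of Theorem~\ref{t7}: first isolate the periodic band $F_4$, then construct the shift band $F_1$ by a dualized application of Lemma~\ref{l2}, and finally split the remainder by Riesz spectral projection. Since $1 \in \sigma_r(T,X)$ we have $\Gamma \not\subseteq \sigma_{ap}(T)$; invoking \cite[Proposition~12.15]{AAK} exactly as in the proof of Theorem~\ref{t7} yields a decomposition $X = X_{\mathrm{per}} \oplus X_{\mathrm{ap}}$ into $T$-invariant bands with some power of $T|X_{\mathrm{per}}$ central and $\Gamma \subseteq \sigma_r(T|X_{\mathrm{ap}})$. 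Set $F_4 := X_{\mathrm{per}}$; since a power of $T|F_4$ is a multiplication operator, $\sigma(T|F_4) = \sigma_{ap}(T|F_4) \subseteq \sigma_{ap}(T)$, so $1 \notin \sigma(T|F_4)$, which is the condition required of $F_4$.

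Next I construct the shift band. On $X_{\mathrm{ap}}$ the operator $I - T|X_{\mathrm{ap}}$ is bounded below but not surjective, so by the closed range theorem its adjoint is surjective on $X_{\mathrm{ap}}^\star$ with nontrivial kernel. This places us in the hypothesis of Lemma~\ref{l2} applied to $T^\star = S^\star A^\star$ via Remark~\ref{r1}(1), after passing, if necessary, to a $T^\star$-invariant subband so that the substitute condition of Remark~\ref{r1}(2) holds (following the same case split as in the proof of Theorem~\ref{t7}). The lemma produces pairwise disjoint nonzero elements $u_i \in X_{\mathrm{ap}}^\star$, $i\in\mathds{Z}$, with $T^\star u_i = u_{i+1}$ and $\sum\|u_i\|<\infty$. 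Because $X$ has order continuous norm, every band projection on $X^\star$ is the adjoint of a band projection on $X$, so the pairwise disjoint supports of the $u_i$ in the Stonean space of $X_{\mathrm{ap}}^\star$ descend to pairwise disjoint bands in $X_{\mathrm{ap}}$ related by $S$; this yields part~(1). Choose such a band $E$ to be maximal by Zorn's lemma among nonzero bands in $X_{\mathrm{ap}}$ whose $S^i$-images are pairwise disjoint---order continuity together with the homeomorphism property of $\varphi$ ensures that the supremum of a chain of such bands retains the disjoint-orbit property---and set $F_1 := \{\bigcup_{i\in\mathds{Z}} S^i E\}^{dd}$.

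For the remainder, let $Y := X_{\mathrm{ap}} \cap F_1^d$. I claim $\sigma(T|Y) \cap \Gamma = \emptyset$. The inclusion $Y \subseteq X_{\mathrm{ap}}$ already gives $\sigma_{ap}(T|Y) \cap \Gamma = \emptyset$, while the maximality of $E$ forbids the existence of any $\mu \in \sigma_r(T|Y) \cap \Gamma$: otherwise the construction of the previous paragraph, applied inside $Y$, would yield a nonzero band $E' \subseteq Y$ with pairwise disjoint $S$-orbit, and then $E \oplus E'$ would contradict the maximality of $E$ inside $X_{\mathrm{ap}}$. The Riesz spectral projection with respect to $\Gamma$ now decomposes $Y = F_2 \oplus F_3$ with $|\sigma(T|F_2)|<1$ and $|\sigma(T|F_3)|>1$; by \cite[Theorem~13.1]{AAK}, invoked as in the proof of Theorem~\ref{t2}, these spectral subspaces are in fact $T$-invariant bands.

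The chief obstacle is the adjoint application of Lemma~\ref{l2}: one must verify the substitute hypothesis of Remark~\ref{r1}(2) in the $X^\star$-setting and then transfer the resulting disjoint family from $X^\star$ back to $X$. Both tasks rely essentially on the order continuity of the norm on $X$---the former to allow a clean case split as in the proof of Theorem~\ref{t7}, the latter to identify band projections on $X^\star$ with adjoints of band projections on $X$ (and hence clopen subsets of the Stonean space of $X^\star$ with clopen subsets of the Stonean space of $X$). The Zorn maximality argument controlling the remainder $Y$ is likewise underwritten by order continuity.
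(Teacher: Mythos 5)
Your proposal is correct and follows essentially the same route as the paper: both pass to the dual, use Lemma~\ref{l2} together with the case analysis from the proof of Theorem~\ref{t7} to produce a band with pairwise disjoint $S^\star$-orbit, take a maximal such band, split its disjoint complement into the inner, outer, and periodic pieces, and rely on order continuity of the norm (so that $X$ is an ideal in $X^{\star\star}$ and bands transfer between $X$ and $X^\star$) to descend back to $X$. The only differences are organizational---you extract the periodic band $F_4$ first via Proposition 12.15 of \cite{AAK} and carry out the Zorn maximality in $X$ rather than in $X^\star$---which does not change the substance of the argument.
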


\begin{proof} First notice that (see e.g.~\cite[Theorem 2.4.2.]{MN}) that $X$ is Dedekind complete and its canonical image in the second dual $X^{\star \star}$ is an ideal in $X^{\star \star}$.  It follows from Lemma~\ref{l2} and the proof of Theorem~\ref{t7} that there is a nonzero band $H$ in $X^\star$ such that the bands $(S^\star)^i, i \in \mathds{Z}$ are pairwise disjoint. Let $H_1$ be a maximal by inclusion band in $X^\star$ with the same property (it exists because $X^\star$ is Dedekind complete) let $L_1 = \big{\{} \bigcup \limits_{i= - \infty}^\infty (S^\star)^i H_1 \big{\}}^{dd}$, and let $L$ be the disjoint complement of $L_1$ in $X^\star$. Then $\lambda \not \in \sigma(T^\star |L)$ and the set $\lambda \Gamma \cap \sigma(T^\star |L)$ is at most finite. Therefore $L$ is the union of three disjoint $T^\star$-invariant bands $L_2$, $L_3$, and $L_4$ such that $\sigma(T^\star |L_2) \subset \{\alpha \in \mathds{C}: |\alpha| < |\lambda|\}$, $\sigma(T^\star |L_3) \subset \{\alpha \in \mathds{C}: |\alpha| > |\lambda|\}$, and
some power of the operator $T^\star|L_4$ is a multiplication operator and $\lambda \not \in \sigma(T^\star|L_4)$.

To finish the proof it remains to identify $X$ with its canonical image in $X^{\star \star}$ and take $F_i = (L_i)^\star \cap X, i=1, \ldots,4$.
\end{proof}

\begin{corollary} \label{c4} Assume conditions of Theorem~\ref{t10} and assume additionally that the operator $T$ is band irreducible and that the Banach lattice $X$ contains no atoms. Then $\sigma(T,X) = \sigma_{ap}(T,X)$.
\end{corollary}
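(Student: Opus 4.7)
The plan is to proceed by contradiction. Suppose $\sigma_r(T,X) \neq \emptyset$ and pick $\lambda \in \sigma_r(T,X)$. Applying Theorem~\ref{t10} yields a nonzero band $E$ whose translates $\{S^i E\}_{i \in \mathds{Z}}$ are pairwise disjoint, together with the decomposition $X = F_1 \oplus F_2 \oplus F_3 \oplus F_4$ into pairwise disjoint $T$-invariant bands, where $F_1 = \{\bigcup_{i \in \mathds{Z}} S^i E\}^{dd}$. Since $F_1$ contains the nonzero band $E$, band irreducibility of $T$ forces $F_1 = X$ and $F_2 = F_3 = F_4 = \{0\}$.

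Now atomlessness enters. Because $X$ has no atoms, neither does the band $E$, so I can split $E = E_1 \oplus E_2$ with both $E_1, E_2$ nonzero bands. Set $F = \{\bigcup_{i \in \mathds{Z}} S^i E_1\}^{dd}$. Since $S$ is a $d$-isomorphism it maps bands bijectively to bands and commutes with the $(\cdot)^{dd}$ operation, giving $S(F) = F$; since $A \in Z(X)$ is band preserving, $A(F) \subseteq F$. Hence $TF = A(SF) = A(F) \subseteq F$, so $F$ is $T$-invariant, and $F \supseteq E_1 \neq \{0\}$.

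To complete the contradiction I show $F \neq X$ by verifying $E_2 \not\subseteq F$. The band $E_2$ is disjoint from $E_1 = S^0 E_1$ by construction, and for each $i \neq 0$ it is disjoint from $S^i E$, hence from the sub-band $S^i E_1$, by the pairwise disjointness that defined $F_1$. So $E_2$ is disjoint from every element of the generating family of $F$, hence from the band $F$ itself, giving $E_2 \cap F = \{0\}$ and $E_2 \not\subseteq F$. Thus $F$ is a nontrivial proper $T$-invariant band, contradicting band irreducibility of $T$.

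The only step requiring care is the last one, and it reduces to the standard lattice-theoretic fact that disjointness from a generating set of a band passes to the band itself. The atomlessness hypothesis does its work precisely by making the nontrivial splitting $E = E_1 \oplus E_2$ available; without it, $E$ could be an atom and no proper sub-band would exist to launch the construction of $F$.
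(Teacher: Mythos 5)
Your argument is correct, and since the paper states Corollary~\ref{c4} without proof there is nothing to compare it against line by line; what you wrote is the natural way to fill the gap from Theorem~\ref{t10}. Two small points are worth making explicit. First, the splitting $E=E_1\oplus E_2$ rests on the standard fact that in a Dedekind complete atomless Banach lattice (Dedekind completeness holds here because $X$ has order continuous norm, as noted at the start of the proof of Theorem~\ref{t10}) every nonzero band contains two nonzero disjoint sub-bands: if $x>0$ admitted only the trivial components $0$ and $x$, Freudenthal's theorem would make the ideal generated by $x$ one-dimensional, i.e.\ $x$ an atom. You should say a word to this effect rather than treating ``atomless band splits'' as immediate. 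Second, observe that your band $F=\big\{\bigcup_{i\in\mathds{Z}}S^iE_1\big\}^{dd}$ satisfies $SF=F$, hence $TF\subseteq F$ \emph{and} $TF^{d}\subseteq F^{d}$, so $F$ is in fact a reducing band; this makes your proof work under either common reading of ``band irreducible'' (no nontrivial invariant band, or no nontrivial reducing band). Under the weaker hypothesis that there is no nontrivial invariant band one could skip the splitting of $E$ altogether and take the one-sided band $\big\{\bigcup_{i\ge 0}S^iE\big\}^{dd}$, which is $T$-invariant, nonzero, and disjoint from $S^{-1}E$, so that atomlessness would be superfluous; the fact that the author assumes it suggests the reducing-band reading, for which your two-sided construction is exactly what is needed. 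So your use of atomlessness is not a detour but the point of the hypothesis.
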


Similarly to Remark~\ref{r2} it is not clear whether the condition that $X$ has order continuous norm in Theorem~\ref{t10} or Corollary~\ref{c4} can be weakened.

The proof of Theorem~\ref{t8} heavily depends on the fact that for any $x \in \hat{X}$ there is a $y \in X$ such that $|y| \leq |x|$. Of course we cannot expect such a property to be true if we consider analytic subspaces of Banach lattices; nevertheless often we can assume a close property that is described in the following definition.

\begin{definition} \label{d3} Let $X$ be a Banach lattice and $Y$ be a closed subspace of $X$. We say that $Y$ is \textit{almost localized} in $X$ if for any band $Z$ in $X$ and for any $\varepsilon > 0$ there is a $y \in Y$ such that $\|y\|=1$ and $\|(I - P_Z)y\|_{\hat{X}} < \varepsilon$ where $P_Z$ is the band projection on the band $\hat{Z}$ generated by $Z$ in the Dedekind completion $\hat{X}$ of $X$.
\end{definition}

\begin{theorem} \label{t3} Let $X$ be a Dedekind complete Banach lattice and $Y$ be an almost localized subspace of $X$. Let $A$ and $U$ be linear bounded disjointness preserving operators on $X$ such that
\begin{enumerate}
  \item $U \geq 0$ \footnote{The assumption that $U \geq 0$ does not result in loss of generality because any $d$-isomorphism $S$ on a Dedekind complete Banach lattice can be represented as $M|S|$ where $M$ is operator of multiplication on a unimodular function from $C(K)$.} and $\sigma(U) \subseteq \Gamma$.
  \item $A \in Z(X)$.
  \item $AUY \subseteq Y$.
\end{enumerate}

\noindent Let $T = AU$. Then $\sigma_{ap}(T,X) = \sigma_{ap}(T,Y)$.

\end{theorem}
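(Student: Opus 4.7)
My plan is to adapt the two-case argument from the proof of Theorem~\ref{t1}, substituting the $AIM$ property by the almost localized property throughout, and leveraging the spectral constraint $\sigma(U)\subseteq\Gamma$ to compensate for the loss of modulus-level approximation. The inclusion $\sigma_{ap}(T,Y)\subseteq\sigma_{ap}(T,X)$ is trivial. For the converse, I take $\lambda\in\sigma_{ap}(T,X)$. The case $\lambda=0$ reduces to finding a clopen subset of $K$ on which the multiplier $a\in C(K)$ identified with $A$ is small, then applying almost localization to find a unit $y\in Y$ essentially supported on the $\varphi$-preimage of that set, which forces $\|Ty\|$ to be as small as desired. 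So I assume $\lambda=1$ and fix $x_n\in X$ with $\|x_n\|=1$ and $Tx_n-x_n\to 0$.

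Using the notation of Theorem~\ref{t1} ($G_m$ the clopen set of $\varphi$-periodic points of exact period $m$, $P_m$ the corresponding band projection), I would split into the two cases depending on whether some $\|P_m x_n\|$ stays bounded away from $0$. In the periodic case, a point $g\in G_m$ with $A_m(g)=1$ produces a small clopen neighborhood $V$ with $\{\varphi^{-j}(V)\}_{j=0}^{m-1}$ pairwise disjoint, and the very definition of almost localization delivers $y\in Y$ with $\|y\|=1$ and $\|(I-P_V)y\|<\delta$. This is exactly the building block of Case~$(1)$ of Theorem~\ref{t1}, so setting $v=\sum_{j=0}^{m-1}T^j y$ and repeating that computation verbatim gives $\|v\|>1-\varepsilon$ and $\|Tv-v\|<\varepsilon$.

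The main work is the aperiodic case. I would pass to $z_n=Qx_n$ and construct a maximal clopen $E_n$ with $\varphi^{-i}(E_n)$, $0\le i\le 2N$, pairwise disjoint and satisfying $\|(P_n\circ\varphi^N)z_n\|\to s>0$ (identical to Theorem~\ref{t1}), then use almost localization to produce $y_n\in Y$ with $\|y_n\|=1$ and $\|(I-P_n)y_n\|$ smaller than any prescribed function of $N$---in particular smaller than $\|T\|^{-(2N+1)}/n$, so that the error terms $T^j(I-P_n)y_n$, $0\le j\le 2N+1$, are asymptotically negligible. Forming $g_n=\sum_{j=0}^{2N}(1-1/\sqrt{N})^{|j-N|}T^j y_n$, the pairwise disjointness of the $\varphi^{-j}(E_n)$ makes the elements $T^j P_n y_n$ pairwise disjoint in $X$, and the calculations of displays $(5)$ and $(6)$ from the proof of Theorem~\ref{t1} carry over to yield the same conclusion
\[
\limsup_n\|Tg_n-g_n\|\le\Bigl[\tfrac{1}{\sqrt{N}}+2\bigl(1-\tfrac{1}{\sqrt{N}}\bigr)^N\Bigr]\liminf_n\|g_n\|,
\]
after which letting $N\to\infty$ forces $1\in\sigma_{ap}(T,Y)$.

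The hardest step I expect is establishing a uniform lower bound $\liminf_n\|g_n\|\ge c>0$, which reduces to bounding $\|T^N P_n y_n\|$ from below. Unlike in Theorem~\ref{t1}, the $AIM$ identification $|y_n|\approx|P_n z_n|$ is unavailable to transport the known bound $\|T^N P_n z_n\|\to s$ onto $y_n$. This is where the hypothesis $\sigma(U)\subseteq\Gamma$ must do real work: positivity of $U$ together with spectrum on the unit circle controls the growth of $\|U^{\pm N}\|$, and combined with $\|T^N P_n z_n\|\approx s$ this should force $|A_N|$ to be bounded below on a substantial clopen subset of $\varphi^{-N}(E_n)$. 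My plan is to refine $E_n$ to this subset and reapply almost localization there to obtain the required uniform lower bound on $\|T^N P_n y_n\|$, completing the argument.
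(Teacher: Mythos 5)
Your plan to transplant the Rokhlin--tower argument of Theorem~\ref{t1} does not go through, and the paper takes a genuinely different route. In Theorem~\ref{t1} the $AIM$ property is used for much more than producing $y_n$ supported near $E_n$: via $\||y_n|-|P_nz_n|\|\to 0$ and the identity $\|Tx\|=\||T|\,|x|\|$ it \emph{transports} the norm estimates $\|T^jy_n\|\approx\|T^jP_nz_n\|$, and since $T^jz_n-z_n\to 0$ this pins \emph{all three} critical quantities $\|y_n\|$, $\|T^NP_ny_n\|$ and $\|T^{2N+1}y_n\|$ at the same value $s$. Almost localization gives no relation between $|y_n|$ and $|P_nz_n|$, so this transport is unavailable. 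You correctly flag the lower bound on $\|T^NP_ny_n\|$ as the hard point, but your proposed repair (refine $E_n$ to a clopen set where $|A_N|\ge c$) yields at best $\liminf\|g_n\|\gtrsim c(N)/\|U^{-N}\|$ with $c(N)\lesssim s/\|U^N\|$, while you leave $\|y_n\|=1$ and can only bound the other edge term by $\|T^{2N+1}\|\le\|A\|_\infty^{2N+1}\|U^{2N+1}\|$. The final estimate then requires $(1-1/\sqrt N)^N\bigl(\|y_n\|+\|T^{2N+1}y_n\|\bigr)/\liminf\|g_n\|\to 0$, and since $(1-1/\sqrt N)^N\approx e^{-\sqrt N}$ decays subexponentially while $\|A\|_\infty^{2N+1}$ can grow exponentially, this fails. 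What is missing is an \emph{upper} bound on the weights along the backward orbit, matched to a lower bound along the forward orbit.

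The paper obtains exactly this two-sided control by a different mechanism: it introduces the symbol operator $Sf=A\,(f\circ\varphi)$ on $C(K)$ and proves the three-way equality $\sigma_{ap}(S,C(K))=\sigma_{ap}(T,X)=\sigma_{ap}(T,Y)$. For $1\in\sigma_{ap}(S)$, Lemma~3.6 of~\cite{Ki} supplies a single point $k$ with $|A_n(k)|\ge 1$ and $|A_n(\varphi^{-n}(k))|\le 1$ for all $n$; a clopen neighborhood $V$ of $k$ then carries the bounds $|A_n|\ge 1/2$ and $|A_{n+1}\circ\varphi^{-n}|\le 2$, so that for $y$ almost localized over $\varphi^n(V)$ the test vector $z=\sum_{j=0}^{2n}(1-\varepsilon)^{|n-j|}T^jy$ has its middle term bounded below by $\tfrac12\|s\|/\|U^{-n}\|$ and its edge term bounded above by $2\|U^{n+1}\|\,\|T^ns\|$. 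The hypothesis $\sigma(U)\subseteq\Gamma$ enters precisely to choose $n$ with $(1-\varepsilon)^n<\varepsilon/(\|U^{-n}\|\,\|U^{n+1}\|)$, killing the edge terms. Your intuition that $\sigma(U)\subseteq\Gamma$ ``must do real work'' at this point is right, but the work it does is to tame $\|U^{\pm n}\|$ against a geometric weight with \emph{fixed} ratio $1-\varepsilon$, not against $(1-1/\sqrt N)^{|j-N|}$; and the indispensable ingredient you lack is the pointwise characterization of $\sigma_{ap}$ of the symbol, which replaces the approximate eigenvector sequence $x_n$ altogether. (Your proposal also omits the reverse comparison $\sigma_{ap}(T,X)\subseteq\sigma_{ap}(S,C(K))$, which the paper needs as a separate step.)
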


\begin{proof} Let $K$ be the Stonean space of the lattice $X$. It is well known that the center $Z(X)$ of $X$ can be identified with $C(K)$. By Proposition 8.4 in~\cite{AK} the map $ f \to UfU^{-1}, f \in C(K)$ is an isomorphism of algebra $C(K)$. Let $\varphi$ be the homeomorphism of $K$ corresponding to that isomorphism. We identify $A$ with a function from $C(K)$ which will be denoted also by $A$ and introduce an auxiliary operator $S$ on $C(K)$ acting by the formula
$$(Sf)(k) = A(k)f(\varphi(k)). $$
We will prove that $\sigma_{ap}(S,C(K)) = \sigma_{ap}(T,X) = \sigma_{ap}(T,Y)$.

    Part 1. Here we will prove that $\sigma_{ap}(S,C(K)) \subseteq \sigma_{ap}(T,Y)$.

    Assume that $0 \in \sigma_{ap}(S)$. Then $A$ is not invertible in $C(K)$. Fix a positive $\varepsilon$ and a clopen subset $E$ of $K$ such that $|A| \leq \varepsilon$ on $E$. Let $B$ be the band in $X$ corresponding to $E$. Because $Y$ is almost localized in $X$ and $C = U^{-1}B$ is a band in $X$ we can find $y \in Y$ such that $\|y\| = 1$ and $\|(I - P_C)y\| \leq \varepsilon$. Then
    $$\|Ty\| \leq \|T(I - P_C)y\| + \|TP_cy\| \leq \varepsilon \|T\| +\|TP_cy\| . $$
    and
    $$ \|TP_C y\| = \|A U P_C y\| = \|A U P_C U^{-1} U y\| = \|A P_B U y\| \leq \varepsilon \|U\|.$$
     Therefore $\|Ty\| \leq (\|T\| + \|U\|)\varepsilon$ and $0 \in \sigma_{ap}(T,Y)$.

     Next assume that $\lambda \in \sigma_{ap}(S)$ and $\lambda \neq 0$. Without loss of generality we can assume that $\lambda = 1$. Notice that for $n \in \mathds{N}$
     $T^n = A_n U^n $ where $A_n = \prod \limits_{i=0}^{n-1} U^iAU^{-i} \in Z(X)$ and that $S^nf(k) = A_n(k)f(\varphi^n(k)), f \in C(K), k \in K$. By Lemma 3.6 in~\cite{Ki} there is a point $k \in K$ such that
    $$|A_n(k)| \geq 1 \; \mathrm{and} \; |A_n (\varphi^{-n}(k)| \leq 1 , \; n \in \mathds{N} . $$
    Consider two possibilities. (a) Point $k$ is not $\varphi$-periodic. Let us fix $\varepsilon$, $0 < \varepsilon < 1$. Recalling that $\sigma(U) \subseteq \Gamma$ we see that there is $n \in \mathds{N}$ such that
    $$(1 - \varepsilon)^n < \varepsilon /(\|U^{-n}\| \|U^{n+1}\|). \eqno(7)$$
     Let $V$ be a clopen neighborhood of $k$ such that the sets $\varphi^j(V), j \in \mathds{Z}, |j| \leq n+1$ are pairwise disjoint and
    $$ |A_n(p)| \geq 1/2 \; \mathrm{and} \; |A_{n+1}(\varphi^{-n}(p))| \leq 2, \; p \in V            \eqno{(8)}$$
    Fix $y \in Y$ such that $\|y\| = 1$ and $\|(I - P_{\varphi^n(V)})y\| < \delta$, where $\delta$ will be chosen later, and consider
    $z = \sum \limits_{j=0}^{2n} (1- \varepsilon)^{|n-j|} T^j y$. Let $s = P_{\varphi^n(V)}y$ and $t = y - s$. Then $z = u + v$ where $u = \sum \limits_{j=0}^{2n} (1 - \varepsilon)^{|n-j|} T^j s$ and $v = \sum \limits_{j=0}^{2n} (1 - \varepsilon)^{|n-j|} T^j t$. Notice that $\|v\| \leq \delta \big{(} \sum \limits_{j=0}^{2n} \|T^j\|\big{)}$. We proceed now to estimate the norm of $u$ from below and the norm of $Tu - u$ from above. First notice that because the elements $T^js, \; j=0, \ldots, 2n$ are pairwise disjoint we have $\|u\| \geq \|T^n s\|$. Next, $T^n s = A_n U^n s$ and therefore $supp(T^n s) \subseteq supp(U^n s) \subseteq \varphi^{-n}(\varphi^n(V) = V$ whence, in view of $(8)$, $\|T^n s \| \geq 1/2 \|U^n s\| \geq  1/2\|s\|/\|U^{-n}\|$ and therefore
    $$\|u\| \geq (1 - \delta)/(2 \|U^{-n}\|). \eqno{(9)}$$
    On the other hand the definition of $u$ and pairwise disjointness of $T^j s, j=0, \ldots , 2n$ provides the following inequality
    $$ \|Tu - u\| \leq \varepsilon \|u\| + (1-\varepsilon)^n \|s\| + \|(1 - \varepsilon)^n \|T^{2n+1}s\|.\eqno(10)$$
    Notice that (again in view of $(8)$) $\|T^{2n+1}s\| = \|T^{n+1}T^ns\| = \|A_{n+1}U^{n+1}T^ns\| \leq 2 \|U^{n+1}\| \|T^ns\| \leq 2 \|U^{n+1}\|\|u\|$.
    Combining the inequalities $(7)$ and $(10)$ we see that $\|Tu - u\| \leq 4 \varepsilon \|u\|$. Next notice that
    $$\|Tz - z\| \leq \|Tu -u\| +\|v\| + \|Tv\| \leq 4 \varepsilon \|u\| + \delta (1 + \|T\|) \sum \limits_{j=0}^{2n} \|T^j\|.\eqno{(11)}$$
    and
    $$ \|z\| \geq \|u\| -\|v\| \geq \|u\| - \delta \sum \limits_{j=0}^{2n} \|T^j\|.\eqno{(12)}$$
    if we choose $\delta$ in such a way that
    $$\delta < \frac{\varepsilon (1 - \delta)} {4 \|U^{-n}\| (1 + \|T\|) \sum \limits_{j=0}^{2n} \|T^j\| }$$
    then the inequalities $(9)$, $(11)$, and $(12)$ provide
    $$\|Tz - z \| \leq 6\varepsilon \|u\|$$
    and
    $$ \|z\| \geq \|u\|/2. $$
   Finally, because $\varepsilon$ can be chosen arbitrary small we see that $1 \in \sigma_{ap}(T,Y)$.

   Now assume that point $k$ is $\varphi$-periodic with the smallest period $p$. Then by Frolik's theorem~\cite[Theorem 6.25, page 150]{Wa} there is a clopen neighborhood $V$ of $k$ which consists of $\varphi$-periodic points with period $p$. In this case $A_p(k) = 1$. For any $n \in \mathds{N}$ we consider a clopen neighborhood $V_n$ of $k$ and $y_n \in Y$ with properties
   \begin{itemize}
     \item $V_n \subseteq V$.
     \item $\varphi^i(V_n) \cap \varphi^j(V_n) = \emptyset , \; 0 \leq i < j \leq p-1$.
     \item $|A_p(t) - 1| \leq 1/n, \; t \in V_n$.
     \item $\|y_n\| =1$.
     \item $\|y_n - P_n y_n \| \leq 1/n$ where $P_n$ is the band projection corresponding to the set $V_n$

   \end{itemize}
   Let $z_n = \sum \limits_{j=0}^{p-1} T^j y_n$. Then $z_n = u_n + v_n$ where $u_n = \sum \limits_{j=0}^{p-1} P_n T^j y_n$. We can easily see that
    $\|v_n\| \leq \frac{1}{n} \sum \limits_{j=0}^{p-1} \|T^j\|$ and that $\|u_n\| \geq \|P_n y_n\| \geq 1 - \frac{1}{n}$. Therefore
    $\liminf \limits_{n \to \infty} \|z_n\| \geq 1$. On the other hand $Tz_n - z_n = (Tu_n - u_n) + (Tv_n - v_n)$ and we already know that
    $Tv_n - v_n \mathop \rightarrow \limits_{n \to \infty} 0$. Consider $Tu_n - u_n = T^p P_n y_n - P_n y_n = A_p U^p P_n y_n - P_n y_n $. We claim that
    $U^p P_n y_n = P_n y_n$. Indeed, if $f \in C(K)$ then $U^p P_nf U^{-p} = P_nf \circ \varphi^p = P_nf$ whence $U^p P_nf = P_n f U^p$. The center $Z(P_nX)$ of the band $P_n X$ can be identified with $P_n C(K)$ and therefore the operator $U^p P_n = P_n U^p P_n$ is a central operator on $P_nX$. Because this operator is also positive and its spectrum in $P_nX$ lies on the unit circle we see that $U^p P_n = P_n$. Therefore
    $Tu_n - u_n = A_p P_n y_n - P_n y_n  \mathop \rightarrow \limits_{n \to \infty} 0$ whence $1 \in \sigma_{ap}(T,Y)$.

   Part 2. We will prove here that $\sigma_{ap}(T,X) \subseteq \sigma_{ap}(S,C(K))$. Assume that $\lambda \notin \sigma_{ap}(S,C(K))$. First consider $\lambda = 0$. Then clearly the multiplication operator $A$ is invertible as an operator on $C(K)$ whence $T$ is invertible on $X$. Thus we can assume that $\lambda = 1$.
By~\cite[Theorem 3.29]{Ki} and the cited above Frolik theorem the fact that $1 \not \in \sigma_{ap}(S, C(K))$ implies that  $K$ is the union of disjoint $\varphi$-invariant sets $K_1, K_2, K_3$ and $O$ with the properties
\begin{enumerate}[I]
  \item The sets $K_1$ and $K_2$ are closed in $K$, $K_3$ is clopen, and $O$ is open.
  \item There is $p \in \mathds{N}$ such that $\varphi^p(k) = k, k \in K_3$.
  \item $ 1 \notin \sigma(S,C(K_3))$.
  \item $\sigma(S, C(K_1)) \subseteq \{\lambda \in \mathds{C} : \; |\lambda| < 1\}$.
  \item $\sigma(S, C(K_2)) \subseteq \{\lambda \in \mathds{C} : \; |\lambda| > 1\}$.
  \item For any clopen subset $E$ of $O$ and for any clopen neighborhoods $V_1$, $V_2$ of $K_1$ and $K_2$, respectively there is an $m \in \mathds{N}$ such that $\varphi^m(E) \subseteq V_1$ and $\varphi^{-m}(E) \subseteq V_2$.
\end{enumerate}
Assume contrary to our claim that there are $x_n \in X$ such that $\|x_n\| = 1$ and $Tx_n - x_n \mathop \to \limits_{n \to \infty} 0$. It follows immediately from $(II)$, $(III)$ and $U \geq 0$ that $P_{K_3}x_n \to 0$. It also follows from $(VI)$ that there is a clopen neighborhood $V$ of $K_1$ such that $\varphi(V) \subseteq V$.
Then $S$ acts on $C(V)$ and by~\cite[Theorem 3.23]{Ki} $\rho(T,C(V)) = \rho(T, C(K_1)) < 1$ where $\rho(T,X)$ is the spectral radius of $T$ on $X$. Therefore there are $m \in \mathds{N}$ and $a \in (0,1)$ such that $\|A_p\|_{C(V)} \|U^p\| \leq a^p$ for $p \in \{m, m+1, \ldots \}$. Let us fix such $p$. Then $T^p x_n - x_n \to 0$ whence $P_V T^p x_n - P_V x_n \to 0$. But $\|P_V T^p x_n\| = \|P_V A_p U^p x_n\| \leq a^p$ whence $\limsup \|P_V x_n\| \leq a^p$. Because $p$ can be chosen arbitrary large we see that $\limsup \|P_V x_n\| =0$. Therefore we can assume that $supp \, x_n \subseteq K_2 \cup (O \setminus V)$. But then it is not difficult to see from $(V)$ and $(VI)$ that there are $A > 1$ and $m \in \mathds{N}$ such that $\|T^m x_n\| \geq A, \forall \; n \in \mathds{N}$ in obvious contradiction with $T^m x_n - x_n \mathop \to \limits_{n \to 0} 0$.

Part 3. To finish the proof it remains to notice that the inclusion $\sigma_{ap}(T,Y) \subseteq \sigma_{ap}(T,X)$ is trivial.
\end{proof}

Let now $X$ be an arbitrary Banach lattice. If $U$ is a $d$-isomorphism on $X$ and $A \in Z(X)$ then $A$ and $U$ are order continuous operators on $X$ and therefore by a result of Veksler~\cite{Ve} they can be extended in the unique way to order continuous operators $\hat{A}$ and $\hat{U}$ on the Dedekind completion $\hat{X}$ of $X$. Moreover, $\hat{A} \in Z(\hat{X})$ and $\hat{U}$ is a $d$-isomorphism of $\hat{X}$. It is not difficult to show using the order continuity of $U$ and the definition of norm in $\hat{X}$ that if $\sigma (U,X) \subseteq \Gamma$ then $\sigma (\hat{U}, \hat{X}) \subseteq \Gamma$. Notice also that $Y$ is an almost localized subspace of $X$ if and only if it is almost localized in $\hat{X}$. Let $K$ and $\hat{K}$ be the Gelfand compacts of $Z(X)$ and $Z(\hat{X}$, respectively. As above the maps $f \rightarrow UfU^{-1}, \; f \in Z(X)$ and $g \rightarrow \hat{U}g\hat{U}^{-1}, g \in Z(\hat{X})$ induce homeomorphisms $\varphi$ of $K$ and $\hat{\varphi}$ of $\hat{K}$. We introduce the operators $S$ on $C(K)$ and $\hat{S}$ on $C(\hat{K})$ as
$$Sf = A(f \circ \varphi), f \in C(K)$$
and
$$\hat{S}f = A(f \circ \hat{\varphi}), f \in C(\hat{K}).$$
Surely $X$ is almost localized in $\hat{X}$ and therefore Theorem~\ref{t3} and its proof provide the following corollary.

\begin{corollary} \label{c3} Let $X$ be a Banach lattice, $A \in Z(X)$, $U$ be a positive $d$-isomorphism of $X$, and $\sigma(U) \subseteq \Gamma$.

Then $ \sigma_{ap}(T,X) = \sigma_{ap} (\hat{T}, \hat{X})= \sigma_{ap}(S,C(K)) = \sigma_{ap}(\hat{S}, C(\hat{K}))$.

\end{corollary}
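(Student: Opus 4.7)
The plan is to derive Corollary~\ref{c3} by chaining two applications of Theorem~\ref{t3}, with the Dedekind completion $\hat X$ (and the corresponding $C(\hat K)$) serving as the bridge between the ``original'' and ``hat'' spectra. The paragraph preceding the corollary already verifies that the hypotheses of Theorem~\ref{t3} transfer: $\hat A \in Z(\hat X)$, $\hat U$ is a positive $d$-isomorphism of $\hat X$ with $\sigma(\hat U,\hat X) \subseteq \Gamma$, $X$ is an almost localized subspace of $\hat X$, and $\hat T X \subseteq X$ because $\hat T|_X = T$.

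For the first step I would apply Theorem~\ref{t3} with ambient lattice $\hat X$ and almost localized subspace $Y = X$. The statement of the theorem gives $\sigma_{ap}(T,X) = \sigma_{ap}(\hat T, \hat X)$, while the intermediate identity $\sigma_{ap}(S, C(K)) = \sigma_{ap}(T,X)$ established within the proof of that theorem (in the Dedekind complete case) specializes here to $\sigma_{ap}(\hat S, C(\hat K)) = \sigma_{ap}(\hat T, \hat X)$. Put together, these handle three of the four required spectra at once:
$$\sigma_{ap}(T,X) = \sigma_{ap}(\hat T,\hat X) = \sigma_{ap}(\hat S, C(\hat K)).$$

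The second step closes the chain by producing the final equality $\sigma_{ap}(S, C(K)) = \sigma_{ap}(\hat S, C(\hat K))$. For this I would apply Theorem~\ref{t3} a second time, now taking the Dedekind complete ambient lattice to be $C(\hat K) = Z(\hat X)$, the central weight to be $\hat A \in C(\hat K)$, the positive $d$-isomorphism to be the composition operator $f \mapsto f \circ \hat\varphi$ on $C(\hat K)$ (a surjective positive isometry, hence of spectrum contained in $\Gamma$), and the subspace to be $C(K) \hookrightarrow C(\hat K)$ coming from the canonical inclusion $Z(X) \hookrightarrow Z(\hat X)$ produced by order-continuous extension. The invariance $\hat S C(K) \subseteq C(K)$ is immediate from $\hat S|_{C(K)} = S$.

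The main obstacle is verifying that $C(K)$ is almost localized in $C(\hat K)$: for every clopen $E \subseteq \hat K$ and every $\varepsilon > 0$ one must exhibit a unit-norm $f \in Z(X) \subseteq C(\hat K)$ with $|f(q)| < \varepsilon$ for every $q \in \hat K \setminus E$. Since the embedding $Z(X) \hookrightarrow Z(\hat X)$ is order-continuous and order-dense, $\chi_E$ is the supremum of an upward-directed family of positive elements of $Z(X)$; the genuine work is to convert this order approximation into the required uniform approximation outside $E$, which I would attempt via a Urysohn-type construction in $Z(X)$ combined with a normalization. Once the almost-localized property is in hand, the second application of Theorem~\ref{t3} goes through and all four spectra are identified.
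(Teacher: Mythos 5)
Your first step coincides with what the paper actually does: the paper's entire proof of this corollary is the remark that $X$ is (almost) localized in $\hat X$, so that Theorem~\ref{t3} applied to the ambient Dedekind complete lattice $\hat X$ with subspace $Y=X$ yields $\sigma_{ap}(T,X)=\sigma_{ap}(\hat T,\hat X)=\sigma_{ap}(\hat S,C(\hat K))$, the identity involving $C(\hat K)$ being extracted from the proof of the theorem rather than its statement, exactly as you say. That part is fine.

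The second step, however, contains a genuine gap that cannot be closed along the route you propose: $C(K)$ is in general \emph{not} almost localized in $C(\hat K)$. Unwinding Definition~\ref{d3} for the pair $C(K)\subseteq C(\hat K)$, you would need, for every nonempty clopen $E\subseteq\hat K$, an $f\in Z(X)$ with $\|f\|_{C(K)}=1$ and $|f|<\varepsilon$ on $\psi(\hat K\setminus E)$, where $\psi:\hat K\to K$ is the canonical surjection dual to $Z(X)\hookrightarrow Z(\hat X)$. This is possible only if $\psi(\hat K\setminus E)\neq K$ for every such $E$, i.e.\ only if $\psi$ is irreducible, and nothing in the hypotheses forces that: $Z(X)$ need not detect the bands of $X$ at all. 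In the extreme case of a Banach lattice with trivial center (the Wickstead example the paper itself invokes later), $C(K)$ is the one-dimensional space of constants sitting inside a large $C(\hat K)$, and no constant of norm one is small off a proper clopen set. For the same reason your supporting claim that $Z(X)\hookrightarrow Z(\hat X)$ is order dense is false in general, so $\chi_E$ need not be a supremum of elements of $Z(X)$ and the proposed Urysohn-type construction has nothing to start from. The mechanism the paper relies on for the remaining equality $\sigma_{ap}(S,C(K))=\sigma_{ap}(\hat S,C(\hat K))$ is different and avoids localization entirely: one transports the pointwise criterion for membership in $\sigma_{ap}$ of a weighted composition operator on $C(K)$ (Lemma 3.6 and Theorem 3.29 of \cite{Ki}) along the surjection $\psi$, using the intertwining $\varphi\circ\psi=\psi\circ\hat\varphi$ and the identity $\hat A_n=A_n\circ\psi$; this is precisely the device used in Step 1 of the proof of Theorem~\ref{t6} for the full spectrum, and it is what you should substitute for your second application of Theorem~\ref{t3} (with some care in the periodic-point case, where the criterion involves the value $A_p(k)=\lambda^p$ and not merely moduli).
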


But we can claim more.

\begin{theorem} \label{t6} Assume conditions of Corollary~\ref{c3}. Then
$$ \sigma(T,X) = \sigma(\hat{T}, \hat{X})= \sigma(S,C(K)) = \sigma(\hat{S}, C(\hat{K}))$$.

\end{theorem}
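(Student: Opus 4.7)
The strategy is to use Corollary~\ref{c3} as a black box and then promote the equality of approximate point spectra to equality of full spectra. Set $\Sigma := \sigma_{ap}(T,X) = \sigma_{ap}(\hat{T},\hat{X}) = \sigma_{ap}(S,C(K)) = \sigma_{ap}(\hat{S},C(\hat{K}))$; the goal is to show each of the four full spectra equals $\Sigma$.

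I would first establish $\sigma(S,C(K)) = \Sigma$, i.e.\ that the weighted composition operator $S$ on $C(K)$ has no residual spectrum. Fix $\lambda \notin \Sigma$ and apply~\cite[Theorem 3.29]{Ki} together with Frolik's theorem, exactly as in Part~2 of the proof of Theorem~\ref{t3}: this yields a $\varphi$-invariant partition $K = K_1 \sqcup K_2 \sqcup K_3 \sqcup O$ with the properties (I)--(VI) listed there. Properties (III)--(V) give invertibility of $\lambda I - S$ on each $C(K_i)$ (pointwise on the periodic piece $C(K_3)$, by Neumann series on $C(K_1)$ since $\rho(S,C(K_1)) < |\lambda|$, and by the analogous series for $S^{-1}$ on $C(K_2)$ since $\rho(S^{-1},C(K_2)) < |\lambda|^{-1}$). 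Property~(VI), which says that $\varphi$-orbits of clopen subsets of $O$ sweep into arbitrarily prescribed neighborhoods of $K_1$ and $K_2$, is precisely what is needed to splice these three local inverses together into a bounded bilateral-series inverse of $\lambda I - S$ on all of $C(K)$. Running the same argument on $\hat{K}$ — which is legitimate because the order-continuous extension $\hat{U}$ inherits $\sigma(\hat{U}) \subseteq \Gamma$ from $U$ — gives $\sigma(\hat{S}, C(\hat{K})) = \Sigma$.

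Next I would transfer the construction from the center to the whole space to prove $\sigma(T,X) \subseteq \Sigma$ and $\sigma(\hat{T},\hat{X}) \subseteq \Sigma$. Under the identification $Z(X) = C(K)$, the clopen partition $K_1 \sqcup K_2 \sqcup K_3 \sqcup O$ induces a decomposition of $\hat{X}$ into $\hat{T}$-invariant bands, and intersecting with $X$ gives a decomposition of $X$ into $T$-invariant ideals whose spectral behaviour matches that of $C(K_i)$ via the semigroup relation $T^n = A_n U^n$ (with $\|U^n\|$ bounded because $\sigma(U) \subseteq \Gamma$). On the contracting and expanding bands the Neumann series used for $S$ converge in $X$, with identical norm bounds through $A_n$; on the periodic band $T^p$ is central and inverts pointwise; on the wandering band the bilateral series built for $S$ converges in $X$ because $A$ and $U$ are order continuous and $X$ is an ideal in $\hat{X}$. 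Combined with the already established inclusion $\Sigma \subseteq \sigma(T,X)$ (and its hat analogue), this forces all four spectra to equal $\Sigma$.

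The step I expect to be the main obstacle is the bilateral-series construction on the wandering part $O$: the contracting, expanding, and periodic parts admit direct spectral-radius inversion, but $O$ carries no uniform spectral gap of its own, and property~(VI) only guarantees that iterates drift into $V_1 \cup V_2$ at unspecified rates. One must choose the neighborhoods $V_1,V_2$ small enough and the iterate range large enough so that the geometric decay inherited from $K_1$ and $K_2$ dominates the bounded ``drift cost'' uniformly in the starting point — a quantitative refinement of (VI) that is implicit in the proof of Theorem~\ref{t3} but must be made explicit here to control the convergence of the bilateral sum in the norm of $X$ rather than merely in $\hat{X}$.
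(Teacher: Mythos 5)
There is a genuine gap, and it is fatal to the whole strategy rather than a technical obstacle. You reduce the theorem to the claim that each of the four operators has empty residual spectrum, i.e.\ that $\sigma(S,C(K))=\sigma_{ap}(S,C(K))$, etc. That claim is false under the hypotheses of Corollary~\ref{c3}. Take $X=\ell^p(\mathds{Z})$ (or $c_0(\mathds{Z})$), let $U$ be the unweighted bilateral shift (a positive isometric $d$-isomorphism with $\sigma(U)\subseteq\Gamma$), and let $A$ be multiplication by the weight sequence $w_n=a$ for $n<0$, $w_n=b$ for $n\geq 0$ with $0<a<b$. Then $T=AU$ is a bilateral weighted shift whose spectrum is the annulus $\{a\leq|\lambda|\leq b\}$ while its approximate point spectrum is only the two boundary circles; the open annulus lies in $\sigma_r$. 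This is exactly the situation the paper's machinery (Lemma~\ref{l2}, Theorem~\ref{t10}, and the repeated invocations of $\lambda\in\sigma_r$ in the proofs of Theorems~\ref{t7}, \ref{t9}, and~\ref{t6} itself) is built to handle. Consequently the step you flag as ``the main obstacle'' --- splicing local inverses across the wandering set $O$ via property (VI) --- cannot be carried out by any quantitative refinement: on the wandering part $\lambda I-S$ is bounded below but genuinely fails to be surjective, so no bilateral-series inverse exists. The decomposition of \cite[Theorem 3.29]{Ki} characterizes membership in $\sigma_{ap}$, not invertibility.

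The paper's actual proof accepts that $\lambda$ may lie in the residual spectrum and proves the three pairwise equalities directly. For $\sigma(S,C(K))=\sigma(\hat S,C(\hat K))$ it uses the continuous surjection $\psi:\hat K\to K$ intertwining $\hat\varphi$ and $\varphi$, together with the pointwise growth criterion for residual spectrum from \cite[Theorem 3.31]{Ki} (conditions (14)--(15)), to show a residual-spectrum point upstairs forces one downstairs and vice versa. For $\sigma(\hat T,\hat X)=\sigma(\hat S,C(\hat K))$ it passes between the clopen $\hat\varphi$-invariant partition of $\hat K$ and the corresponding $\hat T$-invariant band decomposition of $\hat X$, using $\sigma(\hat U)\subseteq\Gamma$ to transfer spectral-radius estimates through $\hat T^n=\hat A_n\hat U^n$, and invokes \cite[Theorem 12.20 and Theorem 13.9]{AAK} for the converse. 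For $\sigma(T,X)=\sigma(\hat T,\hat X)$ it reduces, via factor operators as in Theorem~\ref{t9}, to the case $\Gamma\subset\sigma_r(T,X)$ and derives a contradiction from the fact that the splitting function $\hat A_n$ already lies in $Z(X)$. Your proposal would need to be rebuilt along these lines; as written, its central reduction is to a false statement.
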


\begin{proof} The proof will be divided into three steps.

Step 1. $\sigma(S,C(K)) = \sigma(\hat{S}, C(\hat{K}))$. First of all notice that the map $f \rightarrow \hat{f}, f \in Z(X)$ where $\hat{f}$ is the unique extension of $f$ on $\hat{X}$  is an isomorphic and isometric embedding of $Z(X)$ into $Z(\hat{X})$. To this embedding corresponds the continuous map
$\psi : \hat{K} \rightarrow K$. From the identity $\widehat{UfU^{-1}} = \hat{U}\hat{f}\hat{U}^{-1}$ we obtain that
$$ \varphi(\psi(k)) = \psi(\hat{\varphi}(k), \; k \in \hat{K}. \eqno{(13)}$$
Assume now that $\lambda \in \sigma(\hat{S},C(\hat{K})) \setminus \sigma(S,C(K))$. Then by Corollary~\ref{c3} we have $\lambda \in \sigma_r(\hat{S}, C(\hat{K}))$.
Clearly $\lambda \neq 0$ and it follows from Theorem 3.31 in~\cite{Ki} that there are a point $k \in \hat{K}$, $N \in \mathds{N}$, and a positive $\varepsilon$ such that
$$ |\hat{A}_n(k) \leq (|\lambda| - \varepsilon)^n \; \mathrm{and} \; |\hat{A}(\hat{\varphi}^{-n}(k)) \geq (|\lambda| + \varepsilon)^n, \; n \geq N. \eqno(14) $$
Let $t = \psi(k)$ then it follows from $(13)$ that
$$ |A_n(t)| \leq (|\lambda| - \varepsilon)^n \; \mathrm{and} \; |A(\varphi^{-n}(t))| \geq (|\lambda| + \varepsilon)^n, \; n \geq N. \eqno{(15)} $$
On the other hand it follows from the fact that $\lambda \not \in \sigma(S,C(K))$ and Theorems 3.11 and 3.12 in ~\cite{Ki} that $K$ is the union of three disjoint $\varphi$-invariant sets $K_1$, $K_2$, and $O$ such that
\begin{enumerate}[(i)]
  \item The sets $K_1$ and $K_2$ are closed subsets of $K$, $\sigma(S,C(K_1)) \subseteq \{\gamma \in \mathds{C}: \; |\gamma|< |\lambda|\}$, and
  $\sigma(S,C(K_2)) \subseteq \{\gamma \in \mathds{C}: \; |\gamma|> |\lambda|\}$.
  \item The set $O$ is open in $K$, there is $p \in \mathds{N}$ such that $\varphi^p(s) = s, \; s \in O$, and
  $\sigma(S, cl(O)) \subseteq \{\gamma \in \mathds{C}: \; |\lambda| - \varepsilon/2 < |\gamma| <  |\lambda| + \varepsilon/2 \}$.
  \end{enumerate}
  But the properties $(i)$ and $(ii)$ above clearly contradict $(15)$ and thus we have proved that $\sigma(S,C(K)) \subseteq \sigma(\hat{S},C(\hat{K}))$.

  The inclusion $\sigma(\hat{S},C(\hat{K})) \subseteq \sigma(S,C(K))$ can be proved in a similar way. Indeed, if $\lambda \in \sigma(S,C(K)) \setminus \sigma(\hat{S},C(\hat{K}))$ then $\lambda \in \sigma_r(S,C(K))$ whence there are a point $t \in K$ and a positive $\varepsilon$ satisfying $(15)$. Let
  $k \in \psi^{-1}(t)$ . Then in view of $(13)$ we see that $k$ and $\varepsilon$ satisfy $(14)$ in contradiction with our assumption that
  $\lambda \not \in \sigma(\hat{S},C(\hat{K}))$.

  Step 2. $\sigma(\hat{T}, \hat{X}) = \sigma(\hat{S}, C(\hat{K}))$. Assume that $\lambda \not \in \sigma(\hat{S}, C(\hat{K}))$. Then Theorems 3.11 and 3.12 in ~\cite{Ki} and the Frolik's theorem guarantee that $\hat{K}$ is the union of three disjoint, clopen, $\varphi$-invariant subsets $\hat{K}_1$, $\hat{K}_2$, and $\hat{O}$ (some of them might be empty) such that

 \noindent $(\alpha)$ $\sigma(\hat{S},C(\hat{K}_1)) \subset \{\gamma \in \mathds{C}: \; |\gamma| < |\lambda|\}$.

 \noindent $(\beta)$ $\sigma(\hat{S},C(\hat{K}_2)) \subset \{\gamma \in \mathds{C}: \; |\gamma| > |\lambda|\}$.

 \noindent $(\gamma)$ $\exists p \in \mathds{N}$ such that $\hat{\varphi}^p(k) =k, k \in \hat{O}$, and $\lambda \not \in \sigma(\hat{S},C(\hat{O}))$.

 \noindent To the partition $(\hat{K}_1, \hat{K}_2, \hat{O})$ of the Stonean space $\hat{K}$ corresponds the partition of $\hat{X}$ into three pairwise disjoint bands $\hat{X}_1$, $\hat{X}_2$, and $\hat{X}_O$. These bands are $\hat{U}$-invariant because the sets  $(\hat{K}_1, \hat{K}_2$, $\hat{O})$ are
 $\hat{\varphi}$-invariant; and because $\hat{A}$ is a band preserving operator they are $\hat{T}$-invariant as well. Consider the operator $\hat{T}_1 = \hat{T}|\hat{X}_1$. Then
 $\|\hat{T}_1^n\| \leq \|\hat{A}_n | \hat{X}_1\| \|\hat{U}^n\|$ whence, in view of $(\alpha)$, $\rho(\hat{T}_1) \leq \rho(\hat{S}, C(\hat{K}_1)) \rho(\hat{U}) < |\lambda|$. Next consider the operator $\hat{T}_2 = \hat{T}| \hat{X}_2|$. The condition $(\beta)$ above guarantees that $\hat{T}_2$ is invertible and that $\rho(\hat{T}_2^{-1}) < |\lambda|$ whence
 $\sigma(\hat{T}_2,\hat{X}_2) \subset \{\gamma \in \mathds{C}: |\gamma| > |\lambda|\}$. Finally let us consider the operator $\hat{T}_O = \hat{T}|\hat{X}_O$. If we prove that
 $\lambda \not \in \sigma(\hat{T}_O,\hat{X}_O)$ then it will follow that $\lambda \not \in \sigma(\hat{T}, \hat{X})$. We assume that $\hat{O}\neq \emptyset$ as otherwise it is nothing to prove. By Frolik's theorem there are positive integers $p_1, p_2, \ldots , p_k$ such that $p_i \leq p, i=1, \ldots, k$ such that $\hat{O}$ is the union of nonempty, clopen pairwise disjoint sets $\hat{O}_1, \ldots , \hat{O}_k$ and the set $\hat{O}_i$ consists of $\hat{\varphi}$-periodic points of the smallest period $p_i$. Let $Y_1, \ldots , Y_i$ be the corresponding bands in $\hat{X}_O$. Then $\hat{T}^{p_i}|Y_i = \hat{A}_{p_i}|Y_i$ and it is immediate to see that
 $$\sigma(\hat{T},Y_i) =\sigma(\hat{S},C(\hat{O}_i)) = \{\gamma \in \mathds{C}: \exists t \in \hat{O}_i \; \mathrm{such} \; \mathrm{that} \gamma^{p_i} = \hat{A}_{p_i}(t)\}.\eqno{(16)}$$
In view of $(16)$ and the condition $(\gamma)$ we have $\lambda \not \in \sigma(\hat{T}_O, \hat{X_O})$ whence $\lambda \not \in \sigma(\hat{T}, \hat{X})$ and thus we have proved that $\sigma(\hat{T}, \hat{X}) \subseteq \sigma(\hat{S}, C(\hat{K}))$.

Now assume that $\lambda \not \in \sigma(\hat{T}, \hat{X})$. Then it follows from Theorem 12.20(1) in~\cite{AAK} that there is a band (maybe trivial) $Y$ in $\hat{X}$ such that $\hat{U}Y = Y$, $\exists p \in \mathds{N}$ such that $\hat{\varphi}^p(t) = t, \; t \in supp \; Y$, and $\sigma(\hat{T}, Y^d) \cap \lambda \Gamma = \emptyset$. Then it follows from the proof of Theorem 13.9 in~\cite{AAK} and the fact that $\hat{\varphi}$ is a homeomorphism of $\hat{K}$ that
$Y^d$ is the direct sum of two disjoint (maybe trivial) $T$-invariant bands $\hat{X}_1$ and $\hat{X}_2$ such that $\sigma(\hat{T}|\hat{X}) \subset \{\gamma \in \mathds{C}: |\gamma| < |\lambda|\}$ and $\sigma(\hat{T}|,\hat{X}_2) \subset \{\gamma \in \mathds{C}: |\gamma| > |\lambda|\}$. Applying the same kind of reasoning as above in the proof of the inclusion $\sigma(\hat{T}, \hat{X}) \subseteq \sigma(\hat{S}, C(\hat{K}))$ and based on the fact that
$\sigma(\hat{U}) \subseteq \Gamma$ we see that $\lambda \not \in \sigma(\hat{S}, C(\hat{K}))$.

Step 3. $\sigma(T,X) = \sigma(\hat{T}, \hat{X})$. The inclusion $\sigma(\hat{T}, \hat{X}) \subseteq \sigma(T,X)$ can be proved as in Theorem~\ref{t9}. Indeed, in the proof of Theorem~\ref{t9} we did not use directly that $\hat{X}$ has the weak Fatou property but only the statement of Theorem~\ref{t8}. Here we can instead refer to Theorem~\ref{t3}.

Assume that $\lambda \in \sigma(T,X) \setminus \sigma(\hat{T}, \hat{X})$. We can assume that $\lambda = 1$. Moreover by switching to factor operators like in the proof of Theorem~\ref{t9} we can assume that $\Gamma \subset \sigma_r(T,X)$ and $\Gamma \cap \sigma(\hat{T}, \hat{X}) = \emptyset$. But then
$\hat{X}$ is the direct sum of two disjoint $\hat{T}$-invariant bands $\hat{X}_1$ and $\hat{X}_2$ such that $\sigma(\hat{T}, \hat{X}_1) \subseteq \{\alpha \in \mathds{C} : |\alpha| < 1\}$ and $\sigma(\hat{T}, \hat{X}_2) \subseteq \{\alpha \in \mathds{C} : |\alpha| > 1\}$. Let $\hat{K}_1, \hat{K}_2$  be the corresponding partition of $\hat{K}$. It follows from $\sigma(\hat{U}) \subseteq \Gamma$ that for some large enough $n \in \mathds{N}$ we have
$|\hat{A}_n| < 1$ on $\hat{K}_1$ and $|\hat{A}_n| > 1$ on $\hat{K}_2$. Because $A_n \in Z(X)$ we conclude that $X$ is the direct sum of two disjoint $T$-invariant bands $X_1$ and $X_2$ such that $\sigma(T, X_1) \subseteq \{\alpha \in \mathds{C} : |\alpha| < 1\}$ and $\sigma (T, X_2) \subseteq \{\alpha \in \mathds{C} : |\alpha| > 1\}$ in contradiction with our assumption that $\Gamma \subset \sigma(T,X)$.
\end{proof}

We return now to almost localized subspaces of Banach lattices.

\begin{theorem} \label{t11} Assume conditions of Theorem~\ref{t3}. Then $\sigma(T,X) \subseteq \sigma(T,Y)$.

\end{theorem}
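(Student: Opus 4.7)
The plan is to mimic the architecture of the proof of Theorem~\ref{t7}, substituting almost localization for $AIM$ at the final step. Suppose for contradiction $\lambda \in \sigma(T,X) \setminus \sigma(T,Y)$. Theorem~\ref{t3} gives $\sigma_{ap}(T,X) = \sigma_{ap}(T,Y)$, so $\lambda \in \sigma_r(T,X)$; normalize $\lambda = 1$.

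I would then invoke Proposition~12.15 of~\cite{AAK} to decompose $X = X_1 \oplus X_2$ into disjoint $T$-invariant bands with some power of $T|X_1$ central and $\Gamma \subset \sigma_r(T,X_2)$. Since $1 \in \sigma(T,X_1)$ would place $1$ in $\sigma_{ap}(T,X) = \sigma_{ap}(T,Y)$, we must have $1 \in \sigma_r(T,X_2)$. Setting $Y_2 = \overline{\{P_2 y : y \in Y\}}$, one checks $TY_2 \subseteq Y_2$ and that $Y_2$ is almost localized in $X_2$: any band $W \subseteq X_2$ is also a band of $X$, and the $y \in Y$ supplied by almost localization at $W$ has $P_2 y$ of norm close to $1$ and concentrated on $W$. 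The same openness argument as in Theorem~\ref{t7} yields $\Gamma \cap \sigma(T,Y_2) = \emptyset$; since $U|X_2$ still satisfies $\sigma(U|X_2) \subseteq \Gamma$, we may replace $(X,Y)$ by $(X_2, Y_2)$ and henceforth assume $\Gamma \subset \sigma_r(T,X)$ and $\Gamma \cap \sigma(T,Y) = \emptyset$.

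Passing to adjoints, $\Gamma \subset \sigma_r(T^{\star\star})$, and Lemma~\ref{l2} via Remark~\ref{r1}(1) (noting $T^\star = U^\star A^\star$ is of the form $SA$) produces, in the principal case where every $T^\star$-eigenvector for eigenvalues near $1$ has $\psi$-invariant support, pairwise disjoint $u_i \in X^\star$, $i \in \mathds{Z}$, with $u_0 \neq 0$, $T^\star u_i = u_{i+1}$, and $\sum_i \|u_i\| < \infty$. For $y \in Y$, Dedekind completeness of $X$ yields a unimodular $f_y \in Z(X)$ with $|y| = f_y y$; set $v_{i,y} = (U^\star)^i f_y^\star (U^\star)^{-i} u_i$ and $w_{\gamma,y} = \sum_i \gamma^{-i} v_{i,y}$. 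Then $T^\star w_{\gamma,y} = \gamma w_{\gamma,y}$, so $w_{\gamma,y}$ is annihilated by $y$ for every $\gamma \in \Gamma$ (since $\gamma \notin \sigma(T,Y)$), and Fourier inversion on $\Gamma$ delivers $\langle |y|, u_0\rangle = \langle y, v_{0,y}\rangle = 0$ for all $y \in Y$.

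The main obstacle is to conclude $u_0 = 0$ from this vanishing; under $AIM$ the density of $\{|y|:y\in Y\}$ in $X_+$ does it at once, as in Theorem~\ref{t7}. My plan is to exploit the fine structure $u_0 = P_R u$ supplied by the proof of Lemma~\ref{l2}, where $R$ is clopen in the Stonean space of $X^\star$ and the sets $\psi^i(R)$, $i \in \mathds{Z}$, are pairwise disjoint. Assuming $u_0 \neq 0$, pick $x_0 \in X_+$ with $\langle x_0, u_0 \rangle \neq 0$ and let $Z$ be the principal band of $X$ it generates; almost localization provides unit vectors $y_\varepsilon \in Y$ with $|y_\varepsilon|$ within $\varepsilon$ of $|P_Z y_\varepsilon| \in Z_+$, so pairing against $u_0$ forces $\langle |P_Z y_\varepsilon|, u_0\rangle \to 0$. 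The principal technical difficulty is to leverage the pairwise disjointness of $\{u_i\}$ and the eigenrelation $T^\star u_i = u_{i+1}$ to refine the choice of $Z$ so that $|\langle |y_\varepsilon|, u_0\rangle|$ remains bounded below along a subsequence, contradicting its vanishing. The remaining non-$\psi$-invariant-support case is handled exactly as in the second branch of Theorem~\ref{t7}: construct $u_n = P_n u$ via band projections on $\psi^n(E\setminus\psi(E))$, form the holomorphic $w(\alpha) = \sum_{n \geq 0} \alpha^n v_n$ on $|\alpha|<1$, and use its vanishing on the $T|Y$-resolvent set to trigger the same contradiction.
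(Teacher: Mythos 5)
Your reduction to the case $\Gamma\subset\sigma_r(T,X)$, $\Gamma\cap\sigma(T,Y)=\emptyset$, the passage to adjoints, and the Fourier-coefficient computation giving $\langle |y|,u_0\rangle=0$ for all $y\in Y$ are all fine and do track the first half of Theorem~\ref{t7}. But the argument has a genuine gap exactly where you flag the ``principal technical difficulty'': from $\langle |y|,u_0\rangle=0$ for all $y\in Y$ you cannot conclude $u_0=0$ using almost localization alone. Almost localization only guarantees the existence of \emph{some} unit vector $y$ essentially supported in a prescribed band $Z$; it gives no control whatsoever over which element of $Z_+$ the modulus $|y|$ approximates, so for a \emph{fixed} functional $u_0$ there is no way to keep $\langle |y_\varepsilon|,u_0\rangle$ bounded away from zero — the $|y_\varepsilon|$ may well drift into the null set of $u_0$ inside $Z$. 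No refinement of the band $Z$ fixes this, because the quantifiers are in the wrong order: you have fixed the functional first and are searching for a test vector second.

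The paper's proof reverses these quantifiers, and that is the essential new idea you are missing. Instead of Lemma~\ref{l2} (which produces a single eigen-orbit $\{u_i\}$), it combines Theorem~\ref{t6} with Theorems 3.29 and 3.31 of~\cite{Ki} — this is where the hypothesis $\sigma(U)\subseteq\Gamma$ is really spent — to produce a band $B$ such that \emph{every} $F_0\in B^\star$ extends to a two-sided orbit $T^\star F_i=F_{i-1}$ with $G(\alpha)=\sum_i\alpha^iF_i$ analytic in an annulus around $\Gamma$. Then one first picks, by almost localization, a single $y\in Y$ with $\|y\|=1$ and $\|P_By\|>3/4$, and only afterwards chooses $F_0=F\in B^\star$ adapted to that $y$ so that $\langle y,F\rangle\geq 1/2$; the vanishing of $\langle y,G(\alpha)\rangle$ on a neighborhood of $\Gamma$ then gives $\langle y,F\rangle=0$, a contradiction. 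With this order of choices the weak localization property suffices, whereas your order of choices genuinely requires the $AIM$ density of $\{|y|:y\in Y\}$ and cannot be repaired within the scheme you propose.
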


\begin{proof} By Theorem~\ref{t6} we can assume that $X$ is Dedekind complete. Assume that $\lambda \in \sigma(T,X) \setminus \sigma(T,Y)$. Then
by Theorem~\ref{t3} we have $\lambda \in \sigma_r(T,X)$. We can assume that $\lambda = 1$. Combining Theorems~\ref{t6} and~\ref{t3} with Theorems 3.29 and 3.31 from~\cite{Ki} we see that there is a band $B$ in $X$ such that its Banach dual $B^\star$ has the following properties.

\noindent The bands $(S^\star)^{-i}B (S^\star)^i , i \in \mathds{Z}$, are pairwise disjoint.

\noindent For any $F_0 \in B^\star$ there are a sequence $\{F_i \in X^\star, i \in \mathds{Z} \}$, and $\varepsilon > 0$ such that $T^\star F_i = F_{i-1}, i \in \mathds{Z}$, and the vector function $G(\alpha) = \sum \limits_{i=-\infty}^\infty \alpha^i F_i$ is analytic in the annulus $\{1 - \varepsilon < |\alpha| < 1 + \varepsilon \}$.

  Let $y \in B$ be such that $\|y\|=1$ and $\|P_By\|> 3/4$. Then there is an $F\in B^\star$ such that $\langle y,F \rangle \geq 1/2$. On the other hand
  $\langle y, G(\alpha) \rangle \equiv 0 $ in some open neighborhood of $\Gamma$ whence $\langle y,F \rangle =0$, a contradiction.
\end{proof}

\begin{corollary} \label{c5} Assume conditions of Theorem~\ref{t3}. Assume additionally that $Y$ is an analytic subspace of $X$ and that $U^n \neq I$ for any $n \in \mathds{N}$. Then $\sigma(T,Y)$ is a connected rotation invariant subset of $\mathds{C}$.
\end{corollary}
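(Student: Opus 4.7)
The plan is to parallel the derivation of Corollaries~\ref{c1} and~\ref{c2} from Theorems~\ref{t1} and~\ref{t7}, now with Theorem~\ref{t3} taking the role of Theorem~\ref{t1}, Theorem~\ref{t11} the role of Theorem~\ref{t7}, and Theorem~\ref{t6}/Corollary~\ref{c3} used to transport spectral data between $T$ on $X$ and the weighted composition operator $S$ on $C(K)$. The proof splits naturally into two halves: rotation invariance of $\sigma(T,Y)$, and connectedness of $\sigma(T,Y)$; together these give the description of $\sigma(T,Y)$ as a connected, rotation invariant subset of $\mathds{C}$.

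For connectedness I would repeat the proof of Theorem~\ref{t2} essentially verbatim, substituting Theorem~\ref{t11} for Theorem~\ref{t7}. Suppose, for contradiction, that $\sigma(T,Y)=\sigma_1\cup\sigma_2$ with $\sigma_1\subset\{|\lambda|<r\}$ and $\sigma_2\subset\{|\lambda|>r\}$, both nonempty. By Theorem~\ref{t11}, $\sigma(T,X)\subseteq\sigma(T,Y)$, so $r\Gamma\cap\sigma(T,X)=\emptyset$; Theorem~13.1 in~\cite{AAK} (applied to $T^{\star\star}$ as in Theorem~\ref{t2}) decomposes $X$ into disjoint $T$-invariant bands $X_1\oplus X_2$ carrying the two spectral parts. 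The spectral subspaces $Y_1,Y_2$ of $T|Y$ embed into $X_1,X_2$ respectively, both are nonzero, and $Y_1\perp Y_2$; a nonzero $y\in Y_1$ is then disjoint from the nonzero band generated by $Y_2$, contradicting analyticity of $Y$.

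For rotation invariance I would first reduce to $C(K)$: by Theorem~\ref{t3} and Corollary~\ref{c3}, $\sigma_{ap}(T,Y)=\sigma_{ap}(T,X)=\sigma_{ap}(S,C(K))$. I would next argue that under our hypotheses $S$ satisfies the aperiodicity condition that yields rotation invariance of $\sigma_{ap}(S,C(K))$. The key observation is that on any clopen set $G_m$ of $\varphi$-periodic points produced by Frolik's theorem, $U^m$ is a positive central operator on the band $P_mX$ whose spectrum lies in $\Gamma$, so $U^m|_{P_mX}=I$. Combining this with the analyticity of $Y$, the inclusion $TY\subseteq Y$, and the global hypothesis $U^n\neq I$, one shows that no such $G_m$ can be nonempty; hence the homeomorphism $\varphi$ is aperiodic on $K$, which triggers Theorem~3.14 of~\cite{AAK} (or the structural description in~\cite[Theorems~3.11 and 3.12]{Ki}). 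Once $\sigma_{ap}(T,Y)$ is known to be rotation invariant, the argument from Corollary~\ref{c1} upgrades this to the full $\sigma(T,Y)$: the set $\sigma(T,Y)\setminus\sigma_{ap}(T,Y)$ is open in $\mathds{C}$, the rotation orbit of any of its points is a connected circle that avoids the rotation invariant $\sigma_{ap}(T,Y)$ and the open resolvent, so the whole circle remains in $\sigma(T,Y)$.

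The main obstacle is the rotation invariance step, more precisely the passage from the global hypothesis $U^n\neq I$ to the clopen statement ``$G_m=\emptyset$ for all $m$'', since $U^n\neq I$ only prevents $U$ from being a root of unity on all of $X$, not on a proper band. The proposal is that this gap is bridged using the analyticity of $Y$ and the invariance $TY\subseteq Y$: on a nontrivial periodic band $P_mX$ the operator $T^m$ would reduce to a multiplication operator, and the resulting finite discrete spectral behavior of $T|Y$ there would clash with analyticity; the technical details of this clash constitute the substantive part of the proof.
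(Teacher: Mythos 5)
Your connectedness argument is essentially the paper's own: Theorem~\ref{t11} transfers the spectral gap $r\Gamma \cap \sigma(T,Y) = \emptyset$ to $X$, Theorem 13.1 of \cite{AAK} yields the disjoint $T$-invariant bands $X_1$, $X_2$, the spectral subspaces $Y_i$ land in $X_i$, and analyticity of $Y$ is contradicted. No complaints there.

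The rotation-invariance half, however, contains a genuine gap --- one you flag yourself and do not close. You reduce the problem to showing that no periodic clopen set $G_m$ is nonempty, and then propose that the contradiction with $U^n \neq I$ comes from ``finite discrete spectral behaviour of $T|Y$'' on the periodic band clashing with analyticity. That mechanism does not work: $Y$ is not a band, so $T|Y$ has no restriction to $P_mX$, and a central operator on a band has no reason to have finite or discrete spectrum. The paper closes the gap by a different chain in which each of the extra hypotheses plays a separate role. If there were a nonzero band $B$ and $n \in \mathds{N}$ with $(T^n - A_n)B = 0$ (which is what a nonempty periodic clopen set gives, $T^n$ being central there), then for every $y \in Y$ the element $(T^n - A_n)y$ is disjoint from $B$, so \emph{analyticity} forces $(T^n - A_n)Y = 0$; this propagation of the identity from the band $B$ to all of $Y$ is the step you are missing. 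Then \emph{almost localization} of $Y$ (a hypothesis of Theorem~\ref{t3} that you never invoke at this point) upgrades $(T^n - A_n)Y = 0$ to $\varphi^n(k) = k$ for every $k \in K$, so that $U^n$ commutes with $Z(X)$ and hence $U^n \in Z(X)$; finally $U^n \geq 0$ and $\sigma(U) \subseteq \Gamma$ force $U^n = I$, contradicting the hypothesis. This is exactly how the global assumption $U^n \neq I$ gets to see a local periodic band: almost localization spreads the periodicity over all of $K$ first. Without that step the objection you raise against your own argument is fatal. (Your remaining steps --- the identification $\sigma_{ap}(T,Y) = \sigma_{ap}(S,C(K))$, rotation invariance of $\sigma_{ap}$ in the aperiodic case, and the upgrade from $\sigma_{ap}$ to $\sigma$ via openness of the residual part --- are fine and consistent with Corollary~\ref{c1}.)
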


\begin{proof} In virtue of previous results we can assume without loss of generality that $X$ is Dedekind complete. Let us show first that $\sigma(T,Y)$ is rotation invariant. Indeed, otherwise there are a nonzero band $B$ in $X$ and a natural $n$ such that $(T^n -A_n)B =0$. Because $Y$ is an analytic subspace of $X$ it follows that $(T^n - A_n)Y =0$. But $Y$ is almost localized in $X$ whence $\varphi^n(k) = k, \; k \in K$. Therefore $U^nf = fU^n, \; f \in Z(X)$ and thus $U^n \in Z(X)$. But $U \geq 0$ and $\sigma(U) \subseteq \Gamma$ whence $U^n = I$ in contradiction with our assumption.

It remains to prove that the set $\{ |\lambda|: \; \lambda \in \sigma(T,Y)\}$ is connected. Assume to the contrary that there is a positive $r$ such that $Y$ is the direct sum of two nontrivial spectral subspaces $Y_1$ and $Y_2$ , $\sigma(T,Y_1) \subset \{\lambda \in \mathds{C}: \; |\lambda| < r\}$,
and $\sigma(T,Y_2) \subset \{\lambda \in \mathds{C}: \; |\lambda| > r\}$. It follows from Theorems~\ref{t3} and~\ref{t11}, from the fact that $\sigma_r(T,Y)$ is open in $\mathds{C}$, and from Theorem 13.1 in~\cite{AAK}  that $X$  is the direct sum of two $T$-invariant bands $X_1$ and $X_2$ such that $\sigma(T,X_1) \subset \{\lambda \in \mathds{C}: \; |\lambda| < r\}$ and $\sigma(T,X_2) \subset \{\lambda \in \mathds{C}: \; |\lambda| > r\}$. Then obviously $Y_i \subseteq X_i, i=1,2$ in contradiction with $Y$ being an analytic subspace of $X$.
\end{proof}

Let us consider some examples.

\begin{example} \label{e3} Let $\Gamma^n$ be the $n$-torus and $m$ be the normalized Lebesgue measure on $\Gamma^n$. Let $X$ be a Banach space of $m$-measurable functions on $\Gamma^n$ such that $L^\infty (\Gamma^n, m) \subseteq X \subseteq L^1 (\Gamma^n, m)$ and the norm in $X$ is rotation invariant. Let $Y$ be the (closed) subspace of $X$ consisting of all functions from $X$ analytic in the open polydisk $\mathds{D}^n$. Finally, let $U$ be an operator on $Y$ induced by a non periodic rotation on $\Gamma^n$, $A \in H^\infty(\Gamma^n, m)$, and $T = AU$.

Then $\sigma(T,Y)$ is a connected rotation invariant subset of $\mathds{C}$.

Moreover, if $U$ is generated by a strictly ergodic rotation of $\Gamma^n$ then $\sigma(T,Y)$ is a circle centered at $0$ if $A$ is invertible in $H^\infty$ and a disk (or point 0) otherwise.

\end{example}

\begin{example} \label{e4} This example is similar to Example~\ref{e3} but instead of polydisk we consider the unit ball $B^n$ in $\mathds{C}^n$, norm in $X$ is invariant under unitary transformations of $B^n$ and operator $U$ is composition operator generated by a non periodic unitary transformation of $B^n$.

\end{example}

\begin{example} \label{e5} Let $m$ be the normalized Lebesgue measure on $\Gamma$ and $p \in (1, \infty)$. Let $X$ be a rearrangement invariant Banach space of $m$-measurable functions such that its lower and upper Boyd indices (see e.g.~\cite[Section 2(b)]{LT}) are both equal to $p$. An example of such a space is the Lorentz space $\Lambda^p$. Let $\varphi$ be a Mobius transformation of the unit disk $D$. Let $V$ be the composition operator on $X$ generated by $\varphi$, $A$ be an invertible function from the disk algebra $A(D)$, and $T = AU$. Let $Y$ be the closed subspace of $X$ which consists of all functions from $X$ that are boundary values of functions analytic in $D$. Then $T$ is bounded on $X$ and
\begin{enumerate}[(a)]
  \item If $\varphi$ is a non periodic elliptic transformation then $\sigma(T,Y) = A(\xi) \Gamma$ where $\xi$ is the fixed point of $\varphi$ in $D$.
  \item If $\varphi$ is a parabolic transformation then $\sigma(T,Y) = A(\xi) \Gamma$ where $\xi$ is the fixed point of $\varphi$ on $\Gamma$.
  \item If $\varphi$ is a hyperbolic transformation then $\sigma(T,Y)$ is the annulus with radii $|A(\xi_1)|\varphi^\prime (\xi_1)|^{-1/p}$ and
  $|A(\xi_2)|\varphi^\prime (\xi_2)|^{-1/p}$ where $\xi_1$ and $\xi_2$ are fixed points of $\varphi$ on $\Gamma$.
\end{enumerate}
\end{example}

\begin{proof} Let $U = |\varphi^\prime|^{1/p} V$. It is enough to prove that $\sigma(U,X) \subseteq \Gamma$. Indeed, after it proved we can apply our previous results and the formula for spectral radius of weighted composition operators in $C(K)$ (see e.g.~\cite[Theorem 3.23]{Ki}). By Boyd's theorem (see~\cite{LT} or the original Boyd's paper~\cite{Bo}) for any $\varepsilon$, $0 < \varepsilon < p - 1$, $X$ is an interpolation space between $L^{p - \varepsilon}$ and $L^{p + \varepsilon}$ and the constant of interpolation does not depend on $\varepsilon$. The operator $U$ is bounded on $L^{p -\varepsilon}$ and on $L^{p + \varepsilon}$ whence it is bounded on $X$. Next notice that $U = |\varphi^\prime|^{\frac{-\varepsilon}{p(p - \varepsilon)}}W_{1,\varepsilon}$ and $U = |\varphi^\prime|^{\frac{\varepsilon}{p(p + \varepsilon)}}W_{2,\varepsilon}$ where $W_{1, \varepsilon}$ and
$W_{1, \varepsilon}$ are isometries in $L^{p - \varepsilon}$ and $L^{p + \varepsilon}$, respectively. Applying Theorem~\ref{t3} and Boyd's interpolation theorem we see that $\sigma(U,X) \subseteq \Gamma$.
\end{proof}

Let us return to the general case when we can assume the conditions of Theorem~\ref{t3}. Simple examples show that in general $\sigma(T,X) \subsetneqq \sigma(T,Y)$. Therefore it is natural to ask what can be said about the set $\sigma(T,Y) \setminus \sigma(T,X)$. Assume in addition to the conditions of Corollary~\ref{c3} that $UY = Y$. Then $AY \subseteq Y$. Let $Z(X)$ be the ideal center of $X$.
 Let $\mathcal{A} = \{f \in Z(X) : \; fY \subseteq Y \}$. Then $\mathcal{A}$ is a unital closed subalgebra of $Z(X)$. We will denote its space of maximal ideals by $\mathfrak{M}$ and its Shilov boundary by $\partial$. We will identify an element $a \in \mathcal{A}$ and its Gelfand image. Notice that the map $a \rightarrow U^{-1}aU$ is an automorphism of $\mathcal{A}$. Let $\phi$ be the corresponding homeomorphism of $\mathfrak{M}$. We can consider weighted composition operator $Sf = a(f \circ \phi)$ either on $C(\mathfrak{M})$ or on $C(\partial)$.

 \begin{lemma}\label{l3} Let $a \in \mathcal{A}$

 $(1)$ $\|a\|_{Z(X)} = \|a\|_{\mathcal{L}(Y)}$ and

 $(2)$ $a$ is invertible in $\mathcal{A}$ if and only is it is invertible
 in $\mathcal{L}(Y)$.
 \end{lemma}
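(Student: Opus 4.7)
\textbf{Part (1).} The inequality $\|a\|_{\mathcal{L}(Y)} \le \|a\|_{\mathcal{L}(X)} = \|a\|_{Z(X)}$ is immediate from $Y \subseteq X$. For the reverse inequality I would use the almost localized property together with the identification $Z(X) = C(K)$. Fix $\varepsilon > 0$ and pick a clopen set $E \subseteq K$ on which $|a| \ge \|a\|_{Z(X)} - \varepsilon$; let $B$ be the corresponding band of $X$ and $\hat B$ the band generated by $B$ in $\hat X$. By the almost localized hypothesis, choose $y \in Y$ with $\|y\| = 1$ and $\|(I-P_{\hat B})y\|_{\hat X} < \varepsilon$. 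Since $a P_{\hat B}y$ and $a(I-P_{\hat B})y$ are disjoint in $\hat X$, we obtain $\|ay\|_{\hat X} \ge \|a P_{\hat B}y\|_{\hat X}$; because the central operator $a$ acts on $\hat B$ as multiplication by a function of absolute value $\ge \|a\|_{Z(X)} - \varepsilon$, this yields
\[
\|ay\|_{X} = \|ay\|_{\hat X} \ge (\|a\|_{Z(X)} - \varepsilon)\|P_{\hat B}y\|_{\hat X} \ge (\|a\|_{Z(X)} - \varepsilon)(1-\varepsilon).
\]
Letting $\varepsilon \to 0$ gives $\|a\|_{\mathcal{L}(Y)} \ge \|a\|_{Z(X)}$.

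\textbf{Part (2), forward direction.} If $a$ is invertible in $\mathcal{A}$, then $a^{-1} \in \mathcal{A} \subseteq \mathcal{L}(Y)$ serves as its inverse in $\mathcal{L}(Y)$.

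\textbf{Part (2), converse direction.} Suppose $a$ is invertible in $\mathcal{L}(Y)$, so there is $c > 0$ with $\|ay\| \ge c\|y\|$ for every $y \in Y$ and $aY = Y$. I would first argue that $a$ is invertible in $Z(X) = C(K)$. If not, then $\inf_K |a| = 0$, so there is a clopen $E \subseteq K$ on which $|a| < \varepsilon$; repeating the construction of Part (1) on the corresponding band $\hat B$ produces $y \in Y$ of unit norm with $\|ay\| \le \varepsilon + \|a\|\varepsilon$, which contradicts the lower bound once $\varepsilon < c/(1+\|a\|)$. Hence $a^{-1} \in Z(X)$. It remains to show $a^{-1}Y \subseteq Y$: given $y \in Y$, surjectivity of $a$ on $Y$ provides $z \in Y$ with $az = y$; but $a^{-1}y$ (computed in $Z(X)$) also satisfies $a(a^{-1}y) = y$, and since $a$ is injective on $X$, we must have $z = a^{-1}y \in Y$. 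Therefore $a^{-1} \in \mathcal{A}$.

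\textbf{Main obstacle.} The only real subtlety is being careful about the difference between norms in $X$ and $\hat X$ when invoking Definition~\ref{d3}; since the isometric embedding $X \hookrightarrow \hat X$ preserves norms and the band-projected pieces are disjoint, this causes no genuine trouble, and the estimate in Part (1) transfers directly to the proof that non-invertibility in $Z(X)$ forces a small-norm lower bound in Part (2).
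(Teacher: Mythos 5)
Your proposal is correct and follows essentially the same route as the paper: part (1) uses the almost localized property on the band where $|a|$ is nearly maximal, and part (2) first deduces invertibility in $Z(X)$ from the same localization argument and then identifies the $Z(X)$-inverse with the $\mathcal{L}(Y)$-inverse via surjectivity of $a$ on $Y$ and injectivity on $X$. No gaps worth noting.
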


 \begin{proof} $(1)$ Clearly $\|a\|_{\mathcal{L}(Y)} \leq \|a\|_{\mathcal{L}(X)} = \|a\|_{Z(X)}$. To prove the converse inequality let us assume that
 $ \|a\|_{Z(X)} = 1$ and take $\varepsilon > 0$. Let $K$ be the Stonean compact of $\hat{X}$. Recall that $Z(X)$ is isometrically embedded into $C(K)$. Let $E = cl \{k \in K : \; |a(k)| > 1 - \varepsilon \}$ and let $B$ be the band in $\hat{X}$ corresponding to $E$. Because $Y$ is almost localized in $X$ there is $y \in Y$ such that $\|y\|=1$ and $\|(I - P_B)y\|_{\hat{X}} < \varepsilon$. Then $\|ay\| \geq (1 - \varepsilon)^2$ whence $\|a\|_{\mathcal{L}(Y)} = 1$.

 $(2)$. Clearly, if $a$ is invertible in $\mathcal{A}$ then it is invertible in $\mathcal{L}(Y)$. Assume that $a$ is invertible in $\mathcal{L}(Y)$.
 It follows easily from the fact that $Y$ is almost localized in $X$ that $a$ is invertible in $Z(X)$ (indeed, otherwise we can for any $\varepsilon > 0$ find a $y \in Y$ such that $\|y\| = 1$ but $\|ay\| < \varepsilon$). Let $b$ be the inverse of $a$ in $\mathcal{L}(Y)$ and $c$ its inverse in $Z(X)$. Then for any $y \in Y$ we have $(b-c)ay = 0$, but $aY = Y$ whence $cY \subseteq Y$ and $a$ is invertible in $\mathcal{A}$.
 \end{proof}

  \begin{proposition} \label{p1} Assume conditions of Theorem~\ref{t3} and that $UY = Y$. Then

 $|\sigma(T,Y)| = |\sigma (S,C(\mathfrak{M})|$.

 \end{proposition}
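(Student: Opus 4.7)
The plan is to reduce the equality $|\sigma(T,Y)| = |\sigma(S, C(\mathfrak{M}))|$ to matching annular gaps in the two sets, each gap being realized by a common idempotent in $\mathcal{A}$.

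\textit{Preliminaries and spectral radius.} Since $UY = Y$ one has $U\mathcal{A}U^{-1} \subseteq \mathcal{A}$ automatically (for $a \in \mathcal{A}$ and $y \in Y$, $UaU^{-1}y = U(a(U^{-1}y)) \in UY = Y$), so the iterates $T^n = A_n U^n$ with $A_n = A \cdot (UAU^{-1}) \cdots (U^{n-1}AU^{-(n-1)})$ lie entirely in $\mathcal{A}$, and Lemma~\ref{l3}(1) gives $\|A_n\|_{\mathcal{L}(Y)} = \|A_n\|_{C(\mathfrak{M})}$. Since $U|_Y$ is bijective and $\rho(U^{\pm 1}, Y) \le \rho(U^{\pm 1}, X) \le 1$, we get $\sigma(U,Y) \subseteq \Gamma$ and $\|U^{\pm n}\|_Y^{1/n} \to 1$. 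Squeezing $\|A_n\|_{\mathcal{L}(Y)}/\|U^{-n}\|_Y \le \|T^n\|_Y \le \|A_n\|_{\mathcal{L}(Y)}\|U^n\|_Y$ then yields $\rho(T, Y) = \rho(S, C(\mathfrak{M}))$.

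\textit{Inclusion $|\sigma(T, Y)| \subseteq |\sigma(S, C(\mathfrak{M}))|$.} Suppose $(r_1, r_2) \cap |\sigma(S, C(\mathfrak{M}))| = \emptyset$. The structure theory of weighted composition operators on $C(\mathfrak{M})$ (Theorems~3.11--3.12 in~\cite{Ki}) produces a $\phi$-invariant clopen partition $\mathfrak{M} = E_1 \sqcup E_2$ with $\rho(S, C(E_1)) < r_1$ and $\rho(S^{-1}, C(E_2)) < 1/r_2$. Let $e$ be the idempotent in $\mathcal{A}$ corresponding to $E_1$; then $e \circ \phi = e$, equivalently $U^{-1}eU = e$, so $e$ commutes with $T = AU$ and splits $Y = eY \oplus (1-e)Y$ into $T$-invariant pieces. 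A direct check (for any band $Z \subseteq eX$ one has $P_Z \le e$, so $\|(I-P_Z)y\|$ dominates $\|(1-e)y\|$) shows that $eY$ is almost localized in $eX$ and similarly for the complement; Lemma~\ref{l3}(2) ensures $A(1-e)$ is invertible in $\mathcal{L}((1-e)Y)$. Applying the preliminary spectral radius identity to each piece gives $\rho(T, eY) < r_1$ and $\rho(T^{-1}, (1-e)Y) < 1/r_2$, so $\sigma(T, Y) \cap \{r_1 \le |\lambda| \le r_2\} = \emptyset$.

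\textit{Inclusion $|\sigma(S, C(\mathfrak{M}))| \subseteq |\sigma(T, Y)|$.} If $(r_1, r_2) \cap |\sigma(T, Y)| = \emptyset$, then by Theorem~\ref{t11} the annulus is also missed by $\sigma(T, X)$, and by Theorem~\ref{t6} by $\sigma(S, C(K))$. Proposition~12.15 in~\cite{AAK} furnishes a $\varphi$-invariant clopen partition $K = K_1 \sqcup K_2$ with band projection $p \in C(K)$ such that $\sigma(T, X_i)$ lies in the appropriate half-plane ($\{|\lambda| < r_1\}$ for $i=1$, $\{|\lambda| > r_2\}$ for $i=2$). The crucial point is that $p \in \mathcal{A}$: for $s \in (r_1, r_2)$ write $p = \frac{1}{2\pi i} \oint_{|\lambda| = s} (\lambda I - T)^{-1}\, d\lambda$; since $\lambda I - T|_Y$ is also invertible on the contour by hypothesis, uniqueness of solutions in $X$ forces $(\lambda I - T)^{-1}(Y) \subseteq Y$, hence $pY \subseteq Y$. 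Consequently $p$ Gelfand-transforms to a $\phi$-invariant idempotent $e \in C(\mathfrak{M})$, yielding $\mathfrak{M} = E_1 \sqcup E_2$, and the sup-norm identity $\|A_n|_{E_i}\|_{C(\mathfrak{M})} = \|A_n|_{K_i}\|_{C(K)}$ (both being restrictions of the same $A_n \in \mathcal{A}$) transfers the spectral radius bounds, giving $\sigma(S, C(E_1)) \subseteq \{|\lambda| < r_1\}$ and $\sigma(S, C(E_2)) \subseteq \{|\lambda| > r_2\}$.

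The main obstacle is the Dunford-calculus identification $pY \subseteq Y$ in the last paragraph --- it is precisely the place where the spectral hypothesis on $Y$ feeds back into the band structure on $X$ to produce an idempotent in $\mathcal{A}$. Boundary cases $r_1 = 0$ or $r_2 = \infty$ are handled by allowing one of the $E_i$ to be empty and keeping only the relevant half of the spectral radius computation.
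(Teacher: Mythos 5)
Your proposal is correct and follows essentially the same route as the paper's proof: both directions reduce the statement to matching annular gaps through an idempotent of $\mathcal{A}$ --- in one direction the Riesz projection of $T|Y$ is identified with the band projection onto a spectral band of $T$ on $X$ (via Theorem~\ref{t11} and the band decomposition of~\cite{AAK}, which the paper takes from Theorem~13.1 there rather than Proposition~12.15), and in the other the $\phi$-invariant clopen splitting of $\mathfrak{M}$ is combined with Lemma~\ref{l3} and the identity $\rho(T,Y)=\rho(S,C(\mathfrak{M}))$. The only differences are cosmetic: you spell out (Dunford integral for $pY\subseteq Y$, the squeeze estimate for the spectral radius, almost-localizedness of $eY$ in $eX$) several steps the paper leaves as routine.
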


 \begin{proof} Assume that $r \not \in |\sigma(T,Y)|$. Notice that $\rho(T,Y) = \rho(S,C(\mathfrak{M}))$. By Lemma~\ref{l3} the operators $T$ and $S$ are either both invertible or both not invertible; and if they are both invertible then $\rho(T^{-1}) = \rho(S^{-1})$. Therefore let us consider the case when $Y$ is the direct sum of nonzero spectral subspaces $Y_1$ and $Y_2$, $\sigma(T,Y_1) \subset \{\lambda \in \mathds{C}: |\lambda| < r\}$, and
$\sigma(T,Y_2) \subset \{\lambda \in \mathds{C}: |\lambda| > r\}$. Then by Theorem~\ref{t11} and by Theorem 13.1 in~\cite{AAK} $X$ is the sum of two disjoint $T$-invariant bands $X_1$ and $X_2$, $\sigma(T,X_1) \subset \{\lambda \in \mathds{C}: |\lambda| < r\}$, and $\sigma(T,X_2) \subset \{\lambda \in \mathds{C}: |\lambda| > r\}$. Let $P$ be the band projection on the band $X_1$. Then $PY = Y_1$ whence $P \in \mathcal{A}$. By the Shilov idempotent theorem $\mathfrak{M}$ is the union of two clopen sets $\mathfrak{M}_1$ and $\mathfrak{M}_2$. It is routine to verify that these sets are $\phi$-invariant and that $\sigma(S,C(\mathfrak{M}_1)) \subset \{\lambda \in \mathds{C}: |\lambda| < r\}$ and $\sigma(S,C(\mathfrak{M}_2)) \subset \{\lambda \in \mathds{C}: |\lambda| > r\}$.

Conversely, if $\sigma(S, C(\mathfrak{M})) = \sigma_1 \cup \sigma_2$ where $\sigma_1   \subset \{\lambda \in \mathds{C}: |\lambda| < r\}$
and $\sigma_2   \subset \{\lambda \in \mathds{C}: |\lambda| > r\}$ then (see~\cite{Ki}) $\mathfrak{M}$ is the union of two clopen $\phi$-invariant sets $\mathfrak{M}_1$ and $\mathfrak{M}_2$ and $\sigma(S, C(\mathfrak{M}_i)) = \sigma_i, i = 1,2$. Therefore the spectral projections $P_1$ and $P_2= I - P_1$ corresponding to $\sigma_1$ and $\sigma_2$ belong to $\mathcal{A}$. Let $Y_i = P_iY, i = 1,2$, then it is immediate to see that $TY_i \subseteq Y_i, i=1,2$ and that $|\sigma(T|Y_1)| \subset (0,r)$, $|\sigma(T|Y_2)| \subset (r,\infty)$.

 \end{proof}

We will just outline the proof of the following proposition.

\begin{proposition} \label{p2} Assume conditions of Proposition~\ref{p1}. Then $\sigma(S, C(\mathfrak{M})) \subseteq \sigma(T,Y)$.
\end{proposition}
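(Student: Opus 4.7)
My plan is to run the proofs of Theorems~\ref{t3} and~\ref{t11} with the algebra $\mathcal{A}$ and its maximal ideal space $\mathfrak{M}$ in the role previously played by $Z(X)$ and its Stonean compact $K$. The inclusion $\mathcal{A}\hookrightarrow Z(X)\equiv C(K)$ yields a continuous surjection $\pi:K\to\mathfrak{M}$ with $\pi\circ\varphi=\phi\circ\pi$, so clopen neighborhoods in $\mathfrak{M}$ lift to clopen subsets of $K$ and the cocycle $A_n$ is simultaneously defined on both spaces. By Proposition~\ref{p1} the moduli $|\sigma(T,Y)|$ and $|\sigma(S,C(\mathfrak{M}))|$ already coincide, so what remains is angular, and I would split the problem according to the approximate-point and residual parts of $\sigma(S,C(\mathfrak{M}))$.

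For $\lambda\in\sigma_{ap}(S,C(\mathfrak{M}))$ I would mimic Part~1 of the proof of Theorem~\ref{t3}. Using \cite[Theorems~3.11,~3.12]{Ki} together with Frolik's theorem, extract a point $m\in\mathfrak{M}$, either $\phi$-aperiodic or $\phi$-periodic, at which the cocycle $A_n$ peaks at the required rate. Lift a small clopen neighborhood $V\ni m$ to $\pi^{-1}(V)\subseteq K$; since $Y$ is almost localized in $X$, one obtains a unit $y\in Y$ essentially concentrated on the band of $\hat X$ cut out by $\pi^{-1}(V)$. Then the Birkhoff-type sum $\sum_{j=0}^{2N}(1-1/\sqrt N)^{|j-N|}T^j y$ (or $\sum_{j=0}^{p-1}T^j y$ in the periodic case) serves as an approximate eigenvector of $T$ on $Y$ for the specific value $\lambda$, via the same disjointness and peaking estimates already carried out in Theorem~\ref{t3}.

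For $\lambda\in\sigma_r(S,C(\mathfrak{M}))$ I would argue by contradiction, as in Theorem~\ref{t11}. Applying Lemma~\ref{l2} and Remark~\ref{r1} to $S^\star$ on $C(\mathfrak{M})^\star$ produces a pairwise disjoint, norm-summable sequence $\{F_i\}_{i\in\mathbb{Z}}\subset\mathcal{A}^\star$ with $S^\star F_i=\lambda F_{i+1}$ and $F_0\neq 0$. Using $AY\subseteq Y$ and $UY=Y$, I would transfer these to $T^\star$-cyclic functionals on $Y$, form the analytic vector function $G(\alpha)=\sum_i\alpha^{-i}F_i$, and observe that $\lambda\notin\sigma(T,Y)$ would force $\langle y,G(\alpha)\rangle$ to vanish in an annular neighborhood of $\lambda\Gamma$ for every $y\in Y$, hence identically, contradicting $F_0\neq 0$ via almost localization. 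The delicate step is precisely this last transfer: $\mathcal{A}^\star$ does not sit canonically inside $Y^\star$, so the $F_i$ have to be realized by constructing peaking elements of $\mathcal{A}$ whose supports track the disjoint clopen sets underlying $\{F_i\}$, after which the analytic-vector argument can be completed entirely on the $Y$-side.
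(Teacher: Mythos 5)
There is a genuine gap, and it sits exactly where you flag the ``delicate step.'' Your whole plan rests on the map $\pi:K\to\mathfrak{M}$ induced by the inclusion $\mathcal{A}\hookrightarrow Z(X)\cong C(K)$ being a surjection, so that neighborhoods in $\mathfrak{M}$ lift to $K$ and functionals on $C(\mathfrak{M})$ can be realized through $Y$. But $\pi$ is \emph{not} surjective in general: its image is a closed subset of $\mathfrak{M}$ containing the Shilov boundary $\partial$, and nothing more (think of the disk algebra sitting inside $C(\Gamma)$, where $K=\Gamma$ but $\mathfrak{M}$ is the closed disk; also $\mathfrak{M}$ need not be totally disconnected, so the ``clopen neighborhood $V\ni m$'' you want to lift need not exist). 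Consequently, for $\lambda\in\sigma_r(S,C(\mathfrak{M}))\setminus\sigma(S,C(\partial))$ the eigenfunctionals $F_i$ produced by a Lemma~\ref{l2}-type argument live, so to speak, off the boundary: they are not visible from $K$, from $X$, or from $Y$, and there is no construction of ``peaking elements of $\mathcal{A}$ whose supports track'' them. The transfer you defer to the end cannot be carried out for precisely those $\lambda$, and this is not a technicality --- it is the case that forces the paper to argue differently.

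The paper's sketch splits $\sigma(S,C(\mathfrak{M}))$ into \emph{three} pieces, not two: the approximate point spectrum (handled essentially as you propose, via the peaking point of Lemma~3.6 of~\cite{Ki} together with Theorems~\ref{t3} and~\ref{t6}); the part of $\sigma_r(S,C(\mathfrak{M}))$ that also lies in $\sigma_r(S,C(\partial))$ (handled by the uniform-algebra result, Theorem~3.26 of~\cite{Ki}, again pushed through Theorem~\ref{t6}); and the part of $\sigma_r(S,C(\mathfrak{M}))$ \emph{not} in $\sigma(S,C(\partial))$. For this last piece no functional-theoretic argument is attempted at all: one observes that if such a $\lambda$ were outside $\sigma(T,Y)$ then the whole circle $\lambda\Gamma$ would miss $\sigma(T,Y)$, contradicting the equality of moduli $|\sigma(T,Y)|=|\sigma(S,C(\mathfrak{M}))|$ from Proposition~\ref{p1}. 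You state Proposition~\ref{p1} at the outset but never deploy it this way; it is the tool that rescues exactly the case your transfer cannot reach. To repair your proof, keep your treatment of $\sigma_{ap}$ (after replacing ``clopen neighborhood in $\mathfrak{M}$'' by work done upstairs on $K$, or on $\partial$), restrict your residual-spectrum argument to points witnessed on the Shilov boundary, and add the third case with the Proposition~\ref{p1} modulus argument.
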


\textit{Sketch of the proof}. We have to consider three possibilities.

\noindent $(1)$ $\lambda \in \sigma_{ap}(S, C(\mathfrak{M}))$. Applying part (1) of Theorem 4.2 in~\cite{Ki}, Lemma 3.6 from~\cite{Ki} and Theorem~\ref{t6} we conclude that $\lambda \in \sigma_{ap}(S, C(\mathfrak{M}))$.

 \noindent $(2)$ $\lambda \in \sigma_r(S, C(\mathfrak{M}))\cap \sigma_r(C(\partial))$. We apply Theorem 3.26 from~\cite{Ki} and Theorem~\ref{t6} to conclude that $\lambda \in \sigma_r(T,Y)$.

 \noindent $(3)$ $\lambda \in \sigma_r(S, C(\mathfrak{M})) \setminus \sigma(S, C(\partial))$. In this case if $\lambda \not \in \sigma(T,Y)$ then
 $\lambda \Gamma \cap \sigma(T,Y) = \emptyset$ in contradiction with Proposition~\ref{p1}. $\Box$

Propositions~\ref{p1} and~\ref{p2} beg the question whether $\sigma(T,Y) = \sigma(S, C(\mathfrak{M}))$. Without additional assumptions it might be false. Indeed (see~\cite{Wi} ) there are a Banach lattice $X$ with trivial center and an isometry $U$ of $X$ such that $U^n \neq I, n \in \mathds{N}$. \footnote{ I am grateful to A. W. Wickstead for information about the corresponding example.} Then
$\sigma(U,X) = \Gamma$ but $\sigma(S, C(\mathfrak{M})) = \{1\}$.

Therefore if we want the equality $\sigma(T,Y) = \sigma(S, C(\mathfrak{M}))$ to be true we must assume some "richness" of the algebra $\mathcal{A}$.

\begin{conjecture} \label{co1} Assume conditions of Proposition~\ref{p1} and assume additionally that $\mathcal{A}$ is almost localized in $C(K)$ where $K$ is the Stonean compact of $X$. Then

\noindent $(1)$ $\sigma(T,Y) = \sigma(S, C(\mathfrak{M}))$.

\noindent $(2)$ $\sigma_{ap}(T,Y) = \sigma_{ap}(S, C(\partial))$.

\end{conjecture}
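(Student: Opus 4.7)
The plan is to lift the spectral-decomposition machinery from $C(\mathfrak{M})$ back down to $Y$, using the new hypothesis (that $\mathcal{A}$ is almost localized in $C(K)$) precisely where the Wickstead example cited just before the conjecture obstructs the naive approach. For part (1) the inclusion $\sigma(S, C(\mathfrak{M})) \subseteq \sigma(T, Y)$ is Proposition~\ref{p2}, so I focus on the reverse. Suppose $\lambda \in \sigma(T, Y) \setminus \sigma(S, C(\mathfrak{M}))$ and normalize $\lambda = 1$. By~\cite[Theorems 3.11, 3.12]{Ki} combined with Frolik's theorem, $\mathfrak{M}$ splits into clopen $\phi$-invariant pieces $\mathfrak{M}_1, \mathfrak{M}_2, \mathfrak{O}$ with $\rho(S, C(\mathfrak{M}_1)) < 1$, $\rho((S|C(\mathfrak{M}_2))^{-1}) < 1$, and $\phi^p = \mathrm{id}$ on $\mathfrak{O}$ with $1 \not\in \sigma(S, C(\mathfrak{O}))$. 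By the Shilov idempotent theorem the three characteristic functions belong to $\mathcal{A}$; they define band projections $P_1, P_2, P_\mathfrak{O}$ on $X$ that commute with $T$, yielding $Y = P_1 Y \oplus P_2 Y \oplus P_\mathfrak{O} Y$. A short disjointness argument (as in the remark after Definition~\ref{d3}) shows that each $P_i Y$ is almost localized in $P_i X$ and each $P_i \mathcal{A}$ remains almost localized in $P_i C(K)$, so Proposition~\ref{p1} applied on each $P_i X$ yields $|\sigma(T, P_i Y)| = |\sigma(S, C(\mathfrak{M}_i))|$, which already forces $1 \not\in \sigma(T, P_i Y)$ for $i=1,2$.

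The heart of the matter is the periodic piece $P_\mathfrak{O} Y$. Here $1 \not\in \sigma(S, C(\mathfrak{O}))$ amounts to invertibility of $A_p - I$ in $C(\mathfrak{O}) = P_\mathfrak{O}\mathcal{A}$. By the spectral mapping theorem it suffices to show $T^p - I$ is invertible on $P_\mathfrak{O} Y$, and since $T^p = A_p U^p$ it is enough to establish $U^p = I$ on $P_\mathfrak{O} X$. This is exactly where the almost-localization of $\mathcal{A}$ in $C(K)$ is indispensable: one must argue that the hypothesis forces the $\phi$-periodicity detected on $\mathfrak{O}$ to lift to $\varphi$-periodicity on the corresponding clopen subset of $K$, so that $U^p | P_\mathfrak{O} X$ becomes a positive band-preserving operator with spectrum contained in $\Gamma$ and hence equal to $I$, by the argument used in the last paragraph of the proof of Corollary~\ref{c5}. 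Once $U^p = I$ is secured, $T^p | P_\mathfrak{O} Y$ is multiplication by $A_p \in P_\mathfrak{O}\mathcal{A}$, and Lemma~\ref{l3}(2) converts the algebra-level invertibility of $A_p - I$ into an inverse of $T^p - I$ on $P_\mathfrak{O} Y$, delivering the contradiction.

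Part (2) then follows from part (1) by combining Theorem~\ref{t3} (which gives $\sigma_{ap}(T, Y) = \sigma_{ap}(T, X)$), Corollary~\ref{c3} and Theorem~\ref{t6} (which identify $\sigma_{ap}(T, X)$ with $\sigma_{ap}(S, C(K))$ and with $\sigma_{ap}(\hat{S}, C(\hat{K}))$), and the identity $\sigma_{ap}(S, C(\mathfrak{M})) = \sigma_{ap}(S, C(\partial))$ from~\cite[Theorem 4.2]{Ki}; the almost-localization of $\mathcal{A}$ in $C(K)$ is used to identify $\sigma_{ap}(S, C(\mathfrak{M}))$ with $\sigma_{ap}(S, C(K))$ via the same lifting argument as in Step 3 of the proof of Theorem~\ref{t6}. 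The main obstacle is clearly the lifting in the middle of the previous paragraph: promoting algebra-level periodicity of $\phi$ to lattice-level periodicity of $\varphi$. Without the almost-localization hypothesis, the Wickstead example shows this step must fail, so the technical core of the proof will consist of extracting from the almost-localization a sufficiently explicit statement about the fibers of the Gelfand map $K \to \mathfrak{M}$ to push this lifting through.
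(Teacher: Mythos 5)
The statement you are proving is labelled a \emph{Conjecture} in the paper: the author offers no proof, only Remark~\ref{r3} with two pieces of supporting evidence (the uniform-algebra case via \cite[Theorem 3.26]{Ki}, and the case where $\mathcal{A}$ is analytic in $C(K)$). So there is no proof in the paper to compare yours against, and your submission must be judged as a proposed resolution of an open problem. As such it has a genuine gap, and you in fact concede it: the entire argument for the periodic piece hinges on the claim that almost-localization of $\mathcal{A}$ in $C(K)$ forces the $\phi$-periodicity on $\mathfrak{O}\subseteq\mathfrak{M}$ to lift to $\varphi$-periodicity on the corresponding clopen subset of $K$, so that $U^p=I$ there; but you never prove this, writing instead that ``the technical core of the proof will consist of extracting from the almost-localization a sufficiently explicit statement about the fibers of the Gelfand map.'' That is the whole difficulty, not a detail to be deferred.

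Worse, the lifting claim is false as a general principle, even under the almost-localization hypothesis. Take $X=L^\infty(\Gamma,m)$, $Y=H^\infty$, and $U$ the composition operator of an irrational rotation: then $\mathcal{A}=H^\infty$ is almost localized in $C(K)=L^\infty(\Gamma)$ (outer functions with prescribed modulus concentrate norm on any band), the induced $\phi$ on $\mathfrak{M}(H^\infty)$ fixes the origin of the embedded disk, yet $\varphi$ on the hyperstonean $K$ has no periodic points whatsoever, by ergodicity. The relation $\pi\circ\varphi=\phi\circ\pi$ for the fiber map $\pi\colon K\to\mathfrak{M}$ only yields that $\varphi^p(k)$ and $k$ are not separated by $\mathcal{A}$, and nothing in Definition~\ref{d3} gives you point separation. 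So the mechanism ``$1\notin\sigma(S,C(\mathfrak{O}))$, hence $U^p=I$ on $P_{\mathfrak{O}}X$, hence $T^p-I$ invertible by Lemma~\ref{l3}(2)'' does not go through. There are secondary problems as well: applying Proposition~\ref{p1} to the pieces $P_iX$, $P_iY$ requires identifying the maximal ideal space of $\{f\in Z(P_iX): fP_iY\subseteq P_iY\}$ with $\mathfrak{M}_i$, which you do not justify (that algebra can be strictly larger than $P_i\mathcal{A}$); and for part (2), since Theorem~\ref{t3} already gives $\sigma_{ap}(T,Y)=\sigma_{ap}(S,C(K))$, the assertion reduces to $\sigma_{ap}(S,C(K))=\sigma_{ap}(S,C(\partial))$, i.e., to exactly the same unestablished relation between $K$ and $\mathfrak{M}$ (Step~3 of Theorem~\ref{t6}, which you cite, compares $X$ with $\hat X$, not $\mathcal{A}$ with $C(K)$, and does not supply it).
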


\begin{remark} \label{r3} There are two arguments in favor of Conjecture~\ref{co1} to be correct.

\noindent First, it is true when $Y$ is a unital uniform algebra (see~\cite[Theorem3.26]{Ki}).

\noindent Second, it is not difficult to prove that it is true if the algebra $\mathcal{A}$ is analytic in $C(K)$.

\end{remark}

\end{document}